\documentclass[11pt,a4paper]{article}
\usepackage[utf8]{inputenc}
\usepackage[english]{babel}
\usepackage{graphicx}
\usepackage{float}
\usepackage{pdfpages}
\usepackage{amsthm}
\usepackage{url}
\usepackage{xcolor}
\usepackage{subcaption}
\usepackage[font=small,labelfont=rm]{caption}
\newtheorem{theorem}{Theorem}[section]
\newtheorem{lemma}{Lemma}[section]
\newtheorem{proposition}[theorem]{Proposition}

\theoremstyle{definition}

\theoremstyle{definition}

\newenvironment{remark}{%
	\par\noindent\textbf{Remark:}%
}{\par}

\usepackage{authblk}
\usepackage{amsmath}
\usepackage{amsfonts}
\usepackage{amssymb}

\usepackage{algorithmic}
\usepackage[linesnumbered,ruled,vlined]{algorithm2e}
\numberwithin{equation}{section}
\usepackage{mathtools}
\usepackage{scalefnt}
\usepackage{empheq}
\usepackage[a4paper,left=3.5cm,right=3cm,top=3cm,bottom=3cm]{geometry}
\usepackage{makeidx}
\usepackage{bm}
\makeindex

\usepackage{indentfirst}

\title{Well-posedness of nonlinear parabolic equations with unbounded drift via nonlinear evolution theory}

\author[1]{Thi Tam Dang\thanks{Corresponding author. Email addresses: \texttt{tam.dang@helsinki.fi} (Thi Tam Dang), \texttt{trung-hau.hoang@matfyz.cuni.cz} (Trung Hau Hoang), \texttt{giandomenico.orlandi@univr.it} (Giandomenico Orlandi), \texttt{tuomo.valkonen@iki.fi} (Tuomo Valkonen)}}
\author[2]{Trung Hau Hoang}
\author[3]{Giandomenico Orlandi}
\author[1,4]{Tuomo Valkonen}

\affil[1]{Department of Mathematics and Statistics, University of Helsinki, Finland}
\affil[2]{Faculty of Mathematics and Physics, Charles University, Czech Republic}
\affil[3]{Dipartimento di Informatica, Universit\`a di Verona, Italy}
\affil[4]{MODEMAT Research Center in Mathematical Modeling and Optimization, Quito, Ecuador}

\date{}

\begin{document}

\maketitle

\begin{abstract}
We develop a nonlinear evolution framework for nonlinear parabolic equations with unbounded drift terms formulated in Lorentz spaces. The main contribution lies in the construction of uniformly $m$-accretive operators based on Lorentz--Sobolev embeddings, which allows us to apply the Crandall--Liggett generation theorem for nonlinear evolution equations. Within this framework, we establish existence, uniqueness, and stability of mild solutions. We further show that these mild solutions coincide with weak solutions, ensuring consistency with the variational formulation. Finally, we investigate the long-time asymptotic behavior of solutions.
\end{abstract}

\section{Introduction}
In this paper, we study the following nonlinear parabolic problem:
\begin{equation}\label{eq:main}
\begin{cases}
u_t - \operatorname{div}\!\big( A(x,t,\nabla u) + B(x,t,u) \big) = -\operatorname{div} F & \text{in } \Omega_T,\\
u = 0 & \text{on } \partial\Omega \times (0,T),\\
u(0) = u_0 & \text{in } \Omega,
\end{cases}
\end{equation}
where $\Omega \subset \mathbb{R}^N$ with $N \ge 2$ is a bounded domain with sufficiently regular boundary, and $\Omega_T := \Omega \times (0,T)$. Moreover, 
\begin{itemize}
\item $F \in L^2(\Omega_T)$ and $u_0 \in L^2(\Omega)$.

\item The operator $A:\Omega_T \times \mathbb{R}^N \to \mathbb{R}^N$ is assumed to be a Carath\'eodory function satisfying the following growth and monotonicity conditions: there exist constants $\alpha,\beta>0$ such that, for all $\eta,\eta^* \in \mathbb{R}^N$,
\begin{align}
|A(x,t,\eta)| &\le \beta |\eta| + g(x,t), \quad g \in L^2(\Omega_T), \label{eq:Abound}\\
\langle A(x,t,\eta) - A(x,t,\eta^*), \eta - \eta^* \rangle &\ge \alpha |\eta - \eta^*|^2. \label{eq:Acoer}
\end{align}

\item The operator $B:\Omega_T \times \mathbb{R} \to \mathbb{R}^N$ is a Carath\'eodory function satisfying, for all \(z, z^* \in \mathbb{R}\),
\begin{align}
|B(x,t,z) - B(x,t,z^*)| &\le b(x,t)\,|z - z^*|, \label{eq:Bbound}\\
B(x,t,0) &= 0, \label{eq:B0}
\end{align}
where $b \in L^\infty(0,T; L^{N,\infty}(\Omega))$.
\end{itemize}

When the drift term is bounded (encoding advection through $\mathrm{div}( b \cdot\nabla u )$), problem~\eqref{eq:main} recovers the homogeneous Fokker--Planck equation describing Brownian motion evolution~\cite{Porretta01}. Related works~\cite{Cardaliaguet, Mos-Por, Poretta99, Poretta15} solve problem \eqref{eq:main} assuming either $b\equiv 0$ (pure diffusion) or $b\in L^\infty(0,T;L^1(\Omega))$ with growth condition \eqref{eq:Bbound}. However, the analysis of nonlinear parabolic equations with rough or unbounded drifts remains a central topic in modern PDE theory, with applications ranging from Fokker--Planck dynamics to semiconductor transport and diffusion in heterogeneous media. In such settings, the lack of sufficient regularity of the drift field prevents the direct application of classical $m$-accretive operator theory. The recent contributions~\cite{FM18, Farroni-Greco, Farroni25, Farroni23, Farroni26} provided a first framework for treating drifts in truncated Lorentz spaces, yet relied on restrictive assumptions and lacked a characterization of the long-time regime.

In \cite{FM18}, the authors prove only the existence of solutions to problem \eqref{eq:main} under the stronger assumption that the coefficient $b$ belongs to a proper convex subset $X(\Omega_T)$ of  $L^\infty(0,T;L^{N,\infty}(\Omega))$. In contrast, this paper establishes existence and uniqueness for \eqref{eq:main} under general drift conditions, thereby relaxing the restrictive assumption imposed in \cite{FM18}. Our approach leverages the general theory of nonlinear evolution families generated by time-dependent $m$-accretive operators, employing a dynamic truncation scheme $M_k\to\infty$ to construct explicit mild solutions via resolvent iteration, 
with uniform a priori estimates ensuring Aubin--Lions compactness and resolvent-Gronwall uniqueness in the limit.

The explicit nonlinear evolution family formulation obtained in this work naturally accommodates applications to numerical approximation, optimal control, and the study of long-time behavior. These aspects are typically difficult to capture through classical Galerkin-based methods. In particular, our results cover nonlinear Fokker--Planck and semiconductor-type models of physical relevance, extending the applicability of the semigroup framework beyond previous analytical limits.

To the best of our knowledge, this is the first approach based on nonlinear evolution theory that provides both existence and uniqueness, as well as long-time asymptotic behavior, for nonlinear parabolic equations with an unbounded drift term, where the associated coefficient belongs to a Lorentz space (see Section~\ref{sub:lorentz} for Lorentz spaces). This space admits functions with stronger singularities than those in \( L^{N,q}(\Omega) \) for \(1 \le q \le \infty \), but it does not retain compactness or several other desirable analytical features. The classical theory of evolution operators \cite{HR81,Rob01,Tem98} requires \( m \)-accretivity of the full spatial operator, which fails under drift conditions in \( L^{N,\infty}(\Omega) \). To overcome this difficulty, we first decompose the unbounded drift term \( b(x,t) \) into two components: one that can be bounded and a remainder that can be controlled in an appropriate way. We then introduce the truncated time-dependent operator \( \tilde{A}_{M_k}(t) \). Next, we prove that the truncated operators \( \tilde{A}_{M_k}(t) \) are uniformly \( m \)-accretive (Proposition~\ref{prop:m-accretivity}), thereby generating nonlinear evolution families \( \{S_{M_k}(t,s)\} \). Passing to the limit as \( M_k \to \infty \) via compactness yields a limiting evolution family \( S(t,s) \), which solves the full problem while preserving the quantitative stability estimates obtained from the approximations.

Furthermore, we investigate the long-time behavior of solutions to problem~\eqref{eq:main}. Porretta~\cite{Poretta15} analyzed Fokker--Planck equations with measure-valued drifts, establishing long-time averaging but not pointwise convergence. In contrast, we identify the global attractor as the singleton $\{u_\infty\}$, where $u_\infty \in W_0^{1,2}(\Omega)$ uniquely solves the stationary problem
\[
-\operatorname{div}\big[A(x,\nabla u_\infty)+B(x,u_\infty)\big] = -\operatorname{div} F.
\]
Moreover, we obtain explicit exponential convergence
\[
  \|u(t) - u_\infty\|_{L^2(\Omega)} \le M e^{-\omega t}\|u_0 - u_\infty\|_{L^2(\Omega)}, \quad t \ge 0,
\]
where $\omega = \alpha / (2C_P) > 0$ and $C_P \le \operatorname{diam}(\Omega)^2 / \pi^2$ is the Poincaré constant (see Section~\ref{sec:time-behavior} for more details). This rate coincides, up to the coercivity parameter $\alpha$, with the principal eigenvalue of the linear heat equation, confirming its optimality. These advances yield the \emph{first complete existence--uniqueness theory} for nonlinear parabolic equations with general $L^{N,\infty}(\Omega)$ drifts, substantially extending~\cite{FM18} while preserving their physical relevance to Fokker--Planck and semiconductor models. While~\cite[Theorem~4.2]{FM18} establishes $L^2$-decay under the integrability assumption $h \in L^1_{\mathrm{loc}}(\Omega)$, we prove exponential convergence to the physical steady state $u_\infty$. This provides, to the best of our knowledge, the sharpest available description of the long-time behavior for drifts in the Lorentz space. Furthermore, our Lyapunov functional $y(t) = \|u(t) - u_\infty\|^2$ allows us to identify the global attractor, a feature that was not addressed in the aforementioned work.

The paper is organized as follows. Section~\ref{sec:pre} presents the functional analytic framework, including Lorentz spaces (Section~\ref{sub:lorentz}) and $m$-accretivity of the truncated operators with their evolution families (Section~\ref{sub:accre}). Section~\ref{sec:main} states the main result (Theorem~\ref{thm:main}) and proves existence-uniqueness for problem~\eqref{eq:main}. Section~\ref{sec:time-behavior} analyzes the long-time behavior, establishing exponential convergence to steady states and identifying the global attractor.

\section{Preliminaries}\label{sec:pre}
This section provides an overview of Lorentz spaces and time-dependent $m$-accretive operators with their associated nonlinear evolution families, providing the analytical framework for our main results. For further details on Lorentz spaces, we refer to \cite{LG, Lorentz, ONeil}. For the theory of $m$-accretive operators and nonlinear evolution equations, see \cite{CL80, Kato, Kobayasi, HR81}.
\subsection{Lorentz spaces}\label{sub:lorentz}
For a measurable function $\varphi$ on $\Omega$, we define its distribution function as 
\[
\mu_\varphi(m) = |\left\{ x \in \Omega : |\varphi(x)| > m \right\}|.
\]
The Lorentz space $L^{p,q}(\Omega)$ with $1 \leq p,q < \infty$ consists of all measurable functions $\varphi:\Omega \to \mathbb{R}$ satisfying
\[
\|\varphi \|_{p,q} = \left( p \int_0^\infty \left[ \mu_\varphi(m) \right]^{p/q} m^{q-1}\, dm \right)^{1/q} < \infty.
\]
Equipped with this quasi-norm, $L^{p,q}(\Omega)$ forms a Banach space. In case $p=q$, we recover the Lebesgue space $L^p(\Omega)$. For $q=\infty$, the weak-$L^p$ space $L^{p,\infty}(\Omega)$ consists of functions where
\[
\|\varphi \|_{p,\infty} = \sup_{m>0} m \left[ \mu_\varphi(m) \right]^{1/p} < \infty.
\]
The spaces satisfy the continuous chain of embeddings
\[
L^r(\Omega) \hookrightarrow L^{p,q}(\Omega) \hookrightarrow L^{p,r}(\Omega) \hookrightarrow L^{p,\infty}(\Omega) \hookrightarrow L^q(\Omega)
\]
whenever $1 \leq q < p < r \leq \infty$.\\
A Hölder-type inequality in Lorentz spaces states that if $\varphi \in L^{p_1,q_1}(\Omega)$, $\psi \in L^{p_2,q_2}(\Omega)$ with $1<p_1,p_2<\infty$, $1\leq q_1,q_2\leq\infty$, then
\[
\|\varphi \, \psi \|_{p,q} \leq \| \varphi \|_{p_1,q_1} \|\psi\|_{p_2,q_2},
\]
where $\frac{1}{p} = \frac{1}{p_1} + \frac{1}{p_2}$ and $\frac{1}{q} = \frac{1}{q_1} + \frac{1}{q_2}$.\\
For $\varphi \in L^{p,\infty}(\Omega)$, the distance to bounded functions is defined by
\[
\mathrm{dist}(\varphi, L^\infty(\Omega)) = \inf_{ \psi \in L^\infty(\Omega)} \|\varphi- \psi \|_{L^{p,\infty}(\Omega)} = \lim_{n \to\infty} \|G_n \varphi \|_{p,\infty},
\]
where $G_n \varphi  = \varphi - T_n \varphi  $ and $T_n \varphi = \max\{-n, \min\{\varphi, n\}\}$ is truncation at level $\pm n>0$.\\
The Sobolev embedding extended to Lorentz spaces is stated in Theorem~\ref{thm:embed}.
\begin{theorem}[Sobolev embedding~\cite{Lorentz, ONeil}]\label{thm:embed}
Let $1<p<N$ and $1\leq q\leq p$. If $\varphi \in W_0^{1,1}(\Omega)$ with $|\nabla \varphi | \in L^{p,q}(\Omega)$, then $\varphi \in L^{p^*,q}(\Omega)$ where $p^* = \frac{Np}{N-p}$, and
\[
\|\varphi \|_{p^*,q} \leq S_{N,p} \|\nabla \varphi \|_{p,q}.
\]
Here $S_{N,p} = \omega_N^{-1/N} \frac{p}{N-p}$ with $\omega_N$ is the volume of the unit ball in $\mathbb{R}^N$.
\end{theorem}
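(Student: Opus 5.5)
We would prove Theorem~\ref{thm:embed} by symmetrization combined with a one-dimensional weighted Hardy inequality. This is the classical route of O'Neil and Talenti.

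\textbf{Reduction to rearrangements.} Extend $\varphi$ by zero outside $\Omega$, so that $\varphi\in W^{1,1}(\mathbb{R}^N)$ with compact support and $\nabla\varphi$ vanishes a.e.\ outside $\Omega$. Let $\varphi^\ast$ denote the decreasing rearrangement on $(0,|\Omega|)$ and $\varphi^\star$ the Schwarz symmetrization. The key input is the Pólya--Szegő principle in its rearrangement-invariant form. Since $|\nabla\varphi|\mapsto\|\nabla\varphi\|_{p,q}$ is a rearrangement-invariant quasi-norm, we have
\[
\|\nabla\varphi^\star\|_{p,q}\le\|\nabla\varphi\|_{p,q},
\]
while $\|\varphi\|_{p^*,q}=\|\varphi^\star\|_{p^*,q}$. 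To justify this, we would combine the coarea formula with the isoperimetric inequality $P(\{|\varphi|>m\})\ge N\omega_N^{1/N}\mu_\varphi(m)^{(N-1)/N}$. This yields the pointwise rearrangement estimate (Talenti/Maz'ya)
\[
-\frac{d\varphi^\ast}{ds}(s)\le \frac{1}{N\omega_N^{1/N}}\,s^{-1+1/N}\,\Big(\frac{d}{ds}\int_{\{|\varphi|>\varphi^\ast(s)\}}|\nabla\varphi|\Big),
\]
together with the bound that the averaged derivative is dominated by $|\nabla\varphi|^{\ast\ast}$. This step needs approximation: first prove it for smooth compactly supported $\varphi$, then pass to $W_0^{1,1}$ with $|\nabla\varphi|\in L^{p,q}$ by density and Fatou's lemma.

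\textbf{Hardy inequality.} Integrating from $s$ to $|\Omega|$ and using $\varphi^\ast(|\Omega|)=0$ gives
\[
\varphi^\ast(s)\le \frac{1}{N\omega_N^{1/N}}\int_s^{|\Omega|}r^{-1+1/N}\,g(r)\,dr,\qquad g:=|\nabla\varphi|^\ast\ \text{(after an averaging step)}.
\]
Now express $\|\cdot\|_{p^*,q}$ through rearrangements as $\big(\int_0^\infty (s^{1/p^*}\varphi^\ast(s))^q\,ds/s\big)^{1/q}$, up to the normalization in the paper's definition via the distribution function. Apply the dual Hardy inequality
\[
\Big(\int_0^\infty\Big(s^{\lambda}\int_s^\infty h\Big)^q\frac{ds}{s}\Big)^{1/q}\le\frac1\lambda\Big(\int_0^\infty (s^{\lambda+1}h(s))^q\frac{ds}{s}\Big)^{1/q},
\]
with $\lambda=1/p^*=1/p-1/N$ and $h(r)=r^{-1+1/N}g(r)$. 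The right-hand side becomes $\lambda^{-1}\|g\|_{p,q}$. This produces the constant
\[
\frac{1}{N\omega_N^{1/N}}\cdot\frac{1}{1/p-1/N}=\omega_N^{-1/N}\frac{p}{N-p}=S_{N,p},
\]
exactly as stated.

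\textbf{Main obstacle.} The delicate step is the pointwise rearrangement inequality. It must pass from $|\nabla\varphi|$ to a quantity controlled by $|\nabla\varphi|^\ast$ and not merely $|\nabla\varphi|^{\ast\ast}$. Using only $|\nabla\varphi|^{\ast\ast}$ would cost an extra factor $p'$ from Hardy's inequality, and that factor is harmless only for $q\le p$ via a Pólya--Szegő argument. This is precisely where the hypothesis $q\le p$ enters. We would first try Talenti's approach: treat the distribution function as absolutely continuous for smooth $\varphi$, apply Hölder's inequality on level sets, and invoke the convexity of $t\mapsto t^{p/q}$... (valid for $q\le p$) to conclude. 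Failing that, we would accept the weaker constant and cite O'Neil for the sharp one.
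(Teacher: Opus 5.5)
The paper gives no proof of this statement; it only quotes it from the literature. Your proposal therefore has to stand on its own, and it breaks at exactly the step you flagged.

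\textbf{What is right.} The architecture is the standard one: coarea plus the isoperimetric inequality give Talenti's bound $-\varphi^{\ast\prime}(s)\le (N\omega_N^{1/N})^{-1}s^{-1+1/N}H'(s)$, with $H(s)=\int_{\{|\varphi|>\varphi^\ast(s)\}}|\nabla\varphi|\,dx$, and then the dual Hardy inequality with $\lambda=1/p^\ast$ finishes. Your constant bookkeeping is also correct.

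\textbf{The gap.} The pointwise bound you feed into Hardy,
\[
\varphi^\ast(s)\le\frac{1}{N\omega_N^{1/N}}\int_s^{|\Omega|}r^{-1+1/N}\,|\nabla\varphi|^\ast(r)\,dr ,
\]
is false, with any constant and after any symmetrization. Take $B_R\subset\Omega$, $0<r_0<R$, and $\varphi(x)=\min\{R-r_0,(R-|x|)_+\}$. Then $|\nabla\varphi|=\chi_{\{r_0<|x|<R\}}$, so $|\nabla\varphi|^\ast=\chi_{[0,\omega_N(R^N-r_0^N))}$. For $\omega_N(R^N-r_0^N)\le s<\omega_N R^N$ the right-hand side is $0$, while $\varphi^\ast(s)>0$. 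This $\varphi$ is already radially decreasing, so Talenti's inequality is an equality for it and P\'olya--Szeg\H{o} cannot rescue the step: rearranging $|\nabla\varphi|$ simply moves gradient mass below the lower limit $s$. No ``averaging step'' fixes this. Two side points:
\begin{itemize}
\item P\'olya--Szeg\H{o} in $L^{p,q}$ does not follow from rearrangement invariance. It holds exactly when $q\le p$, and its proof is the comparison described below.
\item Falling back on O'Neil does not give the stated result. The convolution method does not produce the sharp constant $S_{N,p}$; that constant comes from symmetrization.
\end{itemize}

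\textbf{The fix.} Never compare $H'$ with $|\nabla\varphi|^\ast$ pointwise, only inside the weighted integral. Apply Hardy to $h(r)=(N\omega_N^{1/N})^{-1}r^{-1+1/N}H'(r)$ to get
\[
\|\varphi\|_{p^\ast,q}\le S_{N,p}\Big(\int_0^{|\Omega|}H'(s)^q\,s^{q/p-1}\,ds\Big)^{1/q},
\]
so it remains to show
\[
\int_0^{|\Omega|}H'(s)^q s^{q/p-1}\,ds\le\int_0^{|\Omega|}(|\nabla\varphi|^\ast(s))^q s^{q/p-1}\,ds=\|\nabla\varphi\|_{p,q}^q .
\]
This is where $q\le p$ enters: the weight $s^{q/p-1}$ is nonincreasing.
\begin{itemize}
\item Because the weight is nonincreasing, the Hardy--Littlewood inequality lets you replace $H'$ by $(H')^\ast$.
\item Hardy's lemma then reduces the claim to $\int_0^s((H')^\ast)^q\le\int_0^s(|\nabla\varphi|^\ast)^q$ for all $s$.
\item That follows from the majorization $\int_0^s(H')^\ast\le\int_0^s|\nabla\varphi|^\ast$ together with convexity of $t\mapsto t^q$.
\end{itemize}
The majorization is not implied by the trivial bound $H(s)\le\int_0^s|\nabla\varphi|^\ast$, which controls $\int_0^s H'$ rather than $\int_0^s(H')^\ast$. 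It comes from the coarea formula. On a.e.\ level set, $H'(\mu_\varphi(t))$ is the average of $|\nabla\varphi|$ over $\{|\varphi|=t\}$ with respect to $|\nabla\varphi|^{-1}\,d\mathcal{H}^{N-1}$. So $H'$ is a conditional expectation of $|\nabla\varphi|$ given $|\varphi|$, and Jensen's inequality applies.

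\textbf{On the normalization.} The displayed exponent $p/q$ in the paper's definition of $\|\cdot\|_{p,q}$ is a misprint for $q/p$. With $q/p$ one has $p\int_0^\infty\mu_\varphi(m)^{q/p}m^{q-1}\,dm=\int_0^\infty(s^{1/p}\varphi^\ast(s))^q\,ds/s$ exactly. So no normalization factor is lost, which matters because the constant is sharp.
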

\subsection{Accretive operators}
\label{sub:accre}
Let $(X,\|\cdot\|)$ be a uniformly convex Banach space. For each $t\in[0,T]$, let
$A(t):X\rightrightarrows X$ be a (possibly multivalued) operator with  $\operatorname{graph}(A(t))\subset X\times X$. We say that $A(t)$ is \emph{accretive} if
\[
\langle y_1-y_2, J(x_1-x_2)\rangle \ge 0,
\quad\forall (x_i,y_i)\in\operatorname{graph}(A(t)),\ i=1,2,
\]
where $J:X\to X^*$ is the normalized duality map. The family $\{A(t)\}_{t\in[0,T]}$ is said to be
\emph{uniformly $m$-accretive} if for every $t\in[0,T]$:
\begin{itemize}
\item $A(t)$ is accretive,
\item $\operatorname{ran}(I+\lambda A(t))=X$ for all $\lambda>0$,
\item the domains $D(A(t))$ are independent of $t$.
\end{itemize}
For each $\lambda>0$ and $t\in[0,T]$, the \emph{resolvent}
\[
J_\lambda^{A(t)}:=(I+\lambda A(t))^{-1}:X\to X
\]
is single-valued and nonexpansive:
\[
\|J_\lambda^{A(t)}x - J_\lambda^{A(t)}y\|\le \|x-y\|\quad\forall x,y\in X.
\]
When the dependence $t\mapsto A(t)$ satisfies appropriate continuity assumptions (see Theorem~\ref{thm:CL}), the family $\{A(t)\}$ generates a nonlinear evolution family on $X$.

\begin{theorem}[Crandall--Liggett for time-dependent operators~\cite{CL80}]\label{thm:CL}
Let $\{A(t)\}_{t\in[0,T]}\subset 2^{X\times X}$ be a family of nonempty sets in the Banach space $X$ such that $A(t)+wI$ is accretive for some fixed $w\in\mathbb{R}$. Assume that
\begin{enumerate}
\item $D(A(t)):=D(A(0))$ is independent of $t$,
\item $R(I+\lambda A(t))\supset\overline{D(A(0))}$ for $0<\lambda\le\lambda_0$, and $t\in[0,T]$,
\item  $|A(t)x|\le |A(\tau)x| + |t-\tau|L(\|x\|)(1+|A(\tau)x|)$ for $x\in D(A(0))$,
\item $\|(I+\lambda A(t))^{-1}x - (I+\lambda A(\tau))^{-1}x\|\le |t-\tau|L(\|x\|+|A(\tau)x|)$.
\end{enumerate}
Here $L:[0,\infty)\to[0,\infty)$ is increasing. Then for every $x\in D(A(0))$,
\[
S(t)x := \lim_{n\to\infty}\Bigl(I + \frac{t}{n}A\bigl(\tfrac{t}{n}\bigr)\Bigr)^{-n}x
\]
exists and defines a continuous function $t\mapsto S(t)x$.
\end{theorem}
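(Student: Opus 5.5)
The plan follows the classical Crandall--Liggett discretization argument, adapted to time dependence as in \cite{CL80, Kobayasi}. Replacing $A(t)$ by $A(t)+wI$ only rescales the resolvents by factors $(1-\lambda w)^{-1}$, which produce harmless $e^{wt}$ terms, so I will assume $w=0$. Then each $A(t)$ is accretive and, by hypothesis 2, the resolvents $J_\lambda^{A(t)}$ are defined on $\overline{D(A(0))}$ for $0<\lambda\le\lambda_0$ and are nonexpansive there.

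For fixed $x\in D(A(0))$ and a step $\lambda=t/n$, I will study the implicit Euler scheme $x_k=J_\lambda^{A(k\lambda)}x_{k-1}$ with $x_0=x$. Its $n$-th iterate is the approximant $S(t)x$ in the statement; I read $A(t/n)$ there as the operator evaluated at the current time node $k\lambda$.

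\textbf{Step 1: a priori bounds.} Write $|A(t)y|$ for the minimal norm of $A(t)y$. The discrete velocities are $v_k=(x_{k-1}-x_k)/\lambda\in A(k\lambda)x_k$. Accretivity of $A(k\lambda)$, together with hypothesis 3 comparing $A(k\lambda)$ and $A((k+1)\lambda)$, gives
\[
\|v_{k+1}\|\le \|v_k\|+\lambda L(\|x_k\|)(1+\|v_k\|).
\]
Nonexpansiveness and $x\in D(A(0))$ bound $\|x_k-x\|$ linearly in $k\lambda$, so $\|x_k\|$ stays bounded. A discrete Gronwall argument then yields $\|v_k\|\le C(T,\|x\|,|A(0)x|)$ uniformly in $k\le n$ and in $\lambda$. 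Consequently $\|x_k-x_j\|\le C|k-j|\lambda$, i.e.\ the scheme is equi-Lipschitz in time.

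\textbf{Step 2: comparison of two discretizations.} This is the core and main obstacle. Given steps $\lambda$ and $\mu$, with iterates $x_k$ and $y_j$, I will set $a_{k,j}=\|x_k-y_j\|$. Using the resolvent identity, nonexpansiveness, and hypothesis 4 to absorb the mismatch between the times $k\lambda$ and $j\mu$, I aim for the Kobayashi-type recursion
\[
a_{k,j}\le \tfrac{\mu}{\lambda+\mu}\,a_{k-1,j}+\tfrac{\lambda}{\lambda+\mu}\,a_{k,j-1}+\tfrac{\lambda\mu}{\lambda+\mu}\,C\,|k\lambda-j\mu| .
\]
The boundary terms $a_{k,0}$ and $a_{0,j}$ are controlled by Step 1. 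Solving this recursion by induction, as in the Crandall--Liggett lemma with its binomial-probability interpretation, gives
\[
a_{k,j}\le C\Bigl(\bigl((k\lambda-j\mu)^2+k\lambda^2+j\mu^2\bigr)^{1/2}+\text{(time-mismatch term)}\Bigr).
\]
The delicate point is bookkeeping the extra term produced by hypothesis 4. I will first try to bound it by $C\sum|k\lambda-j\mu|\cdot(\text{weights})$ and to dominate it by the same square-root expression, using Jensen's inequality on the binomial weights.

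\textbf{Step 3: passage to the limit.} Take $\lambda=t/n$, $\mu=t/m$ and $k=n$, $j=m$. Then $k\lambda=j\mu=t$, so the bound reduces to $C\,t\,(1/n+1/m)^{1/2}$, and the sequence is Cauchy uniformly for $t\in[0,T]$. The limit $S(t)x$ therefore exists. It is Lipschitz in $t$ by the uniform estimate of Step 1, hence continuous. By nonexpansiveness of the approximants, $S(t)$ extends to $\overline{D(A(0))}$ if desired, although the statement only requires $x\in D(A(0))$.
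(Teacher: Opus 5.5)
The paper only quotes this theorem, so the relevant comparison is the standard Crandall--Pazy proof. You reproduce its structure: implicit Euler scheme, a priori bounds from hypothesis~3, a two-index recursion for $a_{k,j}$, then a Cauchy estimate. Your Step~1 is correct, but there is a genuine gap in Step~2.

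\textbf{The missing step-size factor.} The time-mismatch error in your recursion, $\frac{\lambda\mu}{\lambda+\mu}C|k\lambda-j\mu|$, carries a step-size factor, and that factor is indispensable. It is what makes the error accumulated over the roughly $n+m$ steps of the recursion of order $t^2(1/n+1/m)^{1/2}$. Hypothesis~4 as stated gives only $\|J_h^{A(t)}y-J_h^{A(\tau)}y\|\le|t-\tau|\,L(\|y\|+|A(\tau)y|)$, with no factor $h$, so each step contributes $C|k\lambda-j\mu|$. Along the paths of the recursion, $|k\lambda-j\mu|$ typically has size $\sqrt{\ell\lambda\mu}$ after $\ell$ steps. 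The accumulated bound is therefore of order $(n+m)\,t\,(1/n+1/m)^{1/2}$, e.g.\ of order $t\sqrt{n}$ for $m=2n$, and the Cauchy estimate collapses.

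Hypothesis~3 cannot supply the missing factor. It compares only the minimal sections of $A(t)$ and $A(\tau)$ at the same point, and says nothing about how their graphs or resolvents differ. What your argument actually needs is the Crandall--Pazy condition $\|J_h^{A(t)}y-J_h^{A(\tau)}y\|\le h\,|t-\tau|\,L(\|y\|)(1+|A(\tau)y|)$. Hypothesis~3 has exactly the form obtained from that condition by dividing by $h$ and letting $h\downarrow0$, so this is very likely the intended hypothesis~4. Still, you must state explicitly that you use it, because Step~2 does not follow from hypothesis~4 as written.

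\textbf{Three smaller points.}

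First, the recursion you display (weights $\frac{\mu}{\lambda+\mu}$ and $\frac{\lambda}{\lambda+\mu}$, linking $a_{k-1,j}$ and $a_{k,j-1}$) is Kobayashi's. It comes from the accretivity inequality with $h=\frac{\lambda\mu}{\lambda+\mu}$, not from the resolvent identity. In that route you would have to compare $A(k\lambda)$ and $A(j\mu)$ at $z:=y_j+hu_j=\frac{\mu}{\lambda+\mu}y_j+\frac{\lambda}{\lambda+\mu}y_{j-1}$, where $u_j=(y_{j-1}-y_j)/\mu$. This point need not lie in $D(A(0))$, and you have no bound on $|A(j\mu)z|$. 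The resolvent identity gives instead, for $\mu\le\lambda$,
\[
a_{k,j}\le\tfrac{\mu}{\lambda}\,a_{k-1,j-1}+\tfrac{\lambda-\mu}{\lambda}\,a_{k,j-1}+\bigl\|J_\mu^{A(k\lambda)}y_{j-1}-J_\mu^{A(j\mu)}y_{j-1}\bigr\|.
\]
Here the last term is evaluated at $y_{j-1}\in D(A(0))$, and $|A(j\mu)y_{j-1}|$ is bounded by Step~1 and hypothesis~3. Under the $h$-weighted hypothesis this term is at most $C\mu|k\lambda-j\mu|$, and this is the recursion you should solve.

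Second, replacing $A(t)$ by $A(t)+wI$ is not literally admissible. $R\bigl(I+\lambda(A(t)+wI)\bigr)$ is only known to contain $(1+\lambda w)\overline{D(A(0))}$. Moreover, for nonlinear $A(t)$ the product formula for $A(t)+wI$ is not a rescaling of the one for $A(t)$. Setting $w=0$ is acceptable only as an expository simplification, with the factor $(1-\lambda w)^{-1}$ in the resolvent Lipschitz bounds carried through Steps~1 and~2.

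Third, continuity of $t\mapsto S(t)x$ needs the Step~2 estimate with $k\lambda\ne j\mu$, comparing grids of size $t/n$ and $t'/n$. Step~1 alone does not suffice, since the approximants of $S(t)x$ and $S(t')x$ live on different grids.
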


\subsection{Nonlinear evolution problems}
\label{sub:evolution-problems}
We briefly recall the framework of nonlinear evolution operators following Kobayasi, Kobayashi, and Oharu~\cite{Kobayasi}. Let $X$ be a real Banach space, and let $\{A(t)\}_{t \in [0,T]}$ be a family of (possibly multivalued) nonlinear operators $A(t): X \rightrightarrows X$. We consider the non-autonomous Cauchy problem
\begin{equation}\label{eq:abstract}
\begin{aligned}
&u'(t) + A(t)u(t) = f(t),\quad t\in(0,T),\\
&u(0)=u_0,
\end{aligned}
\end{equation}
where \( u_0 \in X\) and $\{A(t)\}_{t\in[0,T]}\subset 2^{X\times X}$ is a family of operators satisfying the hypotheses of Theorem~\ref{thm:CL}.

For the specific case $X=L^2(\Omega)$, $D(A(t))=V=W_0^{1,2}(\Omega)$, $V^* = W^{-1,2}(\Omega)$, we assume that the family satisfies the following 
\begin{itemize}
\item[(H1)] \emph{Uniform accretivity}: For all \(u,v \in V\) and \(t \in [0,T]\),
  \[
  \langle A(t)u - A(t)v,J(u-v)\rangle_{V^*, V} \ge \alpha\|u-v\|_V^2,\quad \alpha>0.
  \]
\item[(H2)] \emph{Range condition}: $R(I+\lambda A(t))=X$ for all $\lambda>0$, $t\in[0,T]$.
\item[(H3)] \emph{Resolvent continuity}:  For $0 < \lambda \le \lambda_0$, $x \in X$, and $t,\tau \in [0,T]$,
  \[
  \|J_\lambda^{A(t)}x-J_\lambda^{A(\tau)}x\|_X\le|t-\tau|L(\|x\|_X).
  \]
\end{itemize}

\begin{theorem}[\cite{Kobayasi}]\label{thm:evolution}
Under assumptions (H1)--(H3) and Theorem~\ref{thm:CL}, there exists a unique \emph{evolution operator} $\{S(t,s)\}_{0\le s\le t\le T}:X\to X$ satisfying
\begin{enumerate}
\item $S(s,s)=\operatorname{Id}$,
\item $S(t,s)S(s,r)=S(t,r)$ for $r\le s\le t$,
\item $t\mapsto S(t,s)x$ is continuous in $X$ for fixed $s,x\in X$,
\item $\|S(t,s)x-S(t,s)y\|_X\le e^{L(t-s)}\|x-y\|_X$ locally,
\item $S(\cdot,s)x\in C([s,T];X)$.
\end{enumerate}
The mild solution of \eqref{eq:abstract} is given by
\[
u(t)=S(t,0)u_0 + \int_0^t S(t,r)f(r)\,dr.
\]
\end{theorem}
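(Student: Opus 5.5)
The statement to be proved is Theorem~\ref{thm:evolution}. My plan is to build the evolution operator from the time-dependent Crandall--Liggett theorem (Theorem~\ref{thm:CL}) and then check properties 1--5. The mild-solution formula would follow from a discrete variation-of-constants argument.

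\textbf{Step 1: verify the hypotheses of Theorem~\ref{thm:CL} in $X=L^2(\Omega)$.}
\begin{itemize}
\item The Hilbert space $L^2(\Omega)$ is uniformly convex, and the duality map is $J=\mathrm{Id}$.
\item Hypothesis (H1), with $\|u-v\|_V^2\ge C_P^{-1}\|u-v\|_X^2$, gives accretivity of $A(t)$. In fact $A(t)-\omega I$ is accretive with $\omega=\alpha/C_P$, so the $w$ of Theorem~\ref{thm:CL} can be taken to be $0$.
\item The domain $D(A(t))=V$ is independent of $t$ by assumption.
\item The range condition (ii) is exactly (H2).
\item The resolvent estimate (iv) is (H3), since $L(\|x\|)\le L(\|x\|+|A(\tau)x|)$ by monotonicity of $L$.
\item Condition (iii) on $|A(t)x|$ is the one not directly listed. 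I would obtain it from (H3): write $A(t)x$ as the limit of $(x-J_\lambda^{A(t)}x)/\lambda$ in the minimal-section sense and compare with $\tau$. I expect this to be the delicate step, since (H3) gives an $O(|t-\tau|)$ bound only for fixed $\lambda$, not uniformly after division by $\lambda$. If it fails, I would instead invoke the Kobayasi--Kobayashi--Oharu version, which needs only (H3)-type resolvent continuity.
\end{itemize}

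\textbf{Step 2: construct $S(t,s)$ by implicit Euler products.} For $s\le t$ and $x\in\overline{D(A(0))}=X$ (density of $V$ in $L^2$), set
\[
S(t,s)x=\lim_{n\to\infty}\prod_{k=1}^{n}J^{A(s+k(t-s)/n)}_{(t-s)/n}x .
\]
The existence of this limit is the convergence part of Theorem~\ref{thm:CL}, shifted from $0$ to the initial time $s$.

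\textbf{Step 3: check properties 1--5.}
\begin{itemize}
\item Property 1 is immediate from the definition.
\item Nonexpansiveness of each resolvent passes to the limit, which gives property 4 with $L=0$. This is even better than stated; with a general $w$ one gets $e^{w(t-s)}$ via $\|J_\lambda x-J_\lambda y\|\le(1-\lambda w)^{-1}\|x-y\|$.
\item Properties 3 and 5 follow from the continuity of $t\mapsto S(t,s)x$ in Theorem~\ref{thm:CL}.
\item For the semigroup law (property 2), I would use uniqueness of limits of Euler schemes. The standard route is the Bénilan integral-solution inequality. Each limit $u=S(\cdot,s)x$ satisfies
\[
\tfrac12\|u(t)-y\|^2\le\tfrac12\|u(r)-y\|^2+\int_r^t\langle v-u,\,u-y\rangle\,d\sigma\quad\text{for all } (y,v)\in\operatorname{graph}(A(\sigma)),
\]
with $\sigma$-continuity handled by (H3). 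Integral solutions are unique by the accretivity of $A(\cdot)$. Both $S(t,s)S(s,r)x$ and $S(t,r)x$ are integral solutions on $[s,t]$ starting from $S(s,r)x$, so they coincide. Uniqueness of the whole family follows from the same argument.
\end{itemize}

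\textbf{Step 4: the mild-solution formula.} Apply the same construction to $A(t)-f(t)$. This is still uniformly accretive, has the same domain, satisfies the range condition, and satisfies (H3) provided $f$ is, say, Lipschitz or BV in time; for $f\in L^1(0,T;X)$ one concludes by density and the $L^1$-stability of integral solutions. The stated representation $u(t)=S(t,0)u_0+\int_0^tS(t,r)f(r)\,dr$ can only be read literally as the limit of the discrete Duhamel sum when the $S(t,r)$ are affine. In the nonlinear case I would interpret it as the integral-solution/limit-of-Euler-schemes definition and verify that the discrete Duhamel identity passes to the limit. I expect this interpretation, together with condition (iii) in Step 1, to be where the real care is needed.
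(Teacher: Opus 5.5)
The paper gives no proof of this statement. It is quoted from Kobayasi--Kobayashi--Oharu, whose construction is the one you outline: limits of resolvent products along discrete schemes, identified as the unique integral solutions through a B\'enilan-type comparison. Your version specializes this to the globally accretive case, where the Crandall--Pazy product formula suffices and their sublevel-set machinery is not needed. So for items 1--5 your plan is the standard one. A few points need tightening.

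\begin{itemize}
\item You do not need to derive condition (iii) of Theorem~\ref{thm:CL}. Section~\ref{sub:evolution-problems} assumes the family satisfies all hypotheses of that theorem. Your reservation about obtaining (iii) from (H3) is well founded anyway, since dividing the (H3) bound by $\lambda$ loses uniformity.
\item Theorem~\ref{thm:CL} as printed freezes the operator at $t/n$ and gives convergence only on $D(A(0))$. In Step~2 you are therefore really invoking the original Crandall--Pazy theorem, whose resolvent condition carries an extra factor $\lambda$. You then extend to $X=\overline{V}$ using the uniform nonexpansiveness of the products.
\item In your integral inequality the integrand should be $\langle -v(\sigma),u(\sigma)-y\rangle$, with $y\in V$ fixed and $v(\sigma)\in A(\sigma)y$.
\item In the non-autonomous case, uniqueness of integral solutions does not follow from accretivity at each fixed time alone. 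It comes from the B\'enilan comparison of an integral solution with an Euler-scheme limit, where the time-regularity hypotheses control $A(\sigma)$ against $A(\sigma')$.
\item The uniqueness claim in the statement only makes sense among evolution operators whose orbits are integral solutions for $A(\cdot)$. Properties 1--5 do not involve $A$ at all; the family $S(t,s)=\operatorname{Id}$ satisfies them. Your argument proves exactly the meaningful version.
\end{itemize}

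The genuine problem is the last sentence of the statement, and your suspicion is correct. For nonlinear $S$ the representation $u(t)=S(t,0)u_0+\int_0^tS(t,r)f(r)\,dr$ is false, not merely in need of interpretation. Take $X=\mathbb{R}$ and $A(t)u=u+u^3$, which is time-independent and strongly monotone with $\alpha=1$; let $u_0=0$ and $f\equiv 1$.
\begin{itemize}
\item The right-hand side equals $\int_0^t g(\sigma)\,d\sigma$, where $g'=-g-g^3$ and $g(0)=1$. Its second derivative at $t=0$ is $g'(0)=-2$.
\item The true solution of $u'=1-u-u^3$, $u(0)=0$, has $u''(0)=-u'(0)=-1$.
\end{itemize}
So the closing plan of your Step~4, verifying that a discrete Duhamel identity passes to the limit, cannot be carried out. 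No such identity exists even at the discrete level, because $J^{A(t)}_\lambda(x+\lambda f)\neq J^{A(t)}_\lambda x+\lambda J^{A(t)}_\lambda f$.

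What is true is already delivered by the first half of your Step~4. The mild (integral) solution of \eqref{eq:abstract} is $u(t)=U_f(t,0)u_0$, where $U_f$ is the evolution operator generated by $A(t)-f(t)$. Equivalently, it is the limit of the scheme $u_j=J^{A(t_j)}_{\lambda}(u_{j-1}+\lambda f(t_j))$, first for Lipschitz $f$ and then for $f\in L^1(0,T;X)$ by density. The Duhamel line should be replaced by this statement rather than reinterpreted.
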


\section{Main Result}
\label{sec:main}
This section presents our main result (Theorem~\ref{thm:main}), which establishes the existence and uniqueness of solutions to problem~\eqref{eq:main} under the appropriate assumptions.

\begin{theorem}\label{thm:main}
Assume \eqref{eq:Abound}--\eqref{eq:B0} hold and $u_0\in L^2(\Omega)$. Then problem \eqref{eq:main} admits a unique  solution
\begin{equation*}
u \in L^2(0,T;W^{1,2}_0(\Omega)) \cap C([0,T];L^2(\Omega)),
\end{equation*}
satisfying
\begin{equation}\label{eq:weak}
\begin{aligned}
&-\int_{\Omega_T} u\,\partial_t \phi \,dx\,dt+ \int_{\Omega_T} \langle A(x,t,\nabla u) + B(x,t,u), \nabla \phi \rangle \,dx\,dt \\
&\qquad=  \int_{\Omega_T} \langle F, \nabla \phi \rangle \,dx\,dt+ \int_\Omega u_0 \phi(0) \,dx
\end{aligned}
\end{equation}
for all $\phi \in C_c^\infty(\Omega_T)$.
\end{theorem}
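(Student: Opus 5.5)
We follow the strategy announced in the introduction: truncate the drift, solve the truncated problems by nonlinear evolution theory, derive estimates uniform in the truncation, pass to the limit by compactness, and prove uniqueness separately.

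\textbf{Decomposition and truncation.} Since $L^\infty(\Omega)$ is not dense in $L^{N,\infty}(\Omega)$, we cannot approximate $b$ by bounded functions in norm. Instead we split $b = T_{M}b + G_{M}b$ using the truncation $T_M$ and remainder $G_M$ from Section~\ref{sub:lorentz}. Here $T_M b\in L^\infty(\Omega_T)$ and $\|G_M b(t)\|_{N,\infty}$ is controlled by $\mathrm{dist}(b(t),L^\infty)$ plus a small term. Correspondingly we define the truncated vector field
\[
B_M(x,t,z):=B(x,t,T_{M}z)\ \text{ or } \ B_M := \tfrac{T_M b}{b}\,B,
\]
which is globally Lipschitz in $z$ with bounded constant $M$. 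We then set
\[
\tilde A_{M}(t)u:=-\operatorname{div}\big(A(x,t,\nabla u)+B_M(x,t,u)\big)
\]
on $V=W^{1,2}_0(\Omega)$, with the part realized in $X=L^2(\Omega)$.

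\textbf{Generation for the truncated problems.} For fixed $M$, Young's inequality gives
\[
\int B_M(u)-B_M(v)\cdot\nabla(u-v)\ge -\tfrac{\alpha}{2}\|\nabla(u-v)\|^2-\tfrac{M^2}{2\alpha}\|u-v\|^2 .
\]
Hence $\tilde A_M(t)+wI$ is accretive with $w=M^2/(2\alpha)$, the quasi-accretive form of (H1). The range condition (H2) follows from Minty--Browder, since $I+\lambda(\tilde A_M+wI)$ is monotone, coercive and hemicontinuous from $V$ to $V^*$. Resolvent continuity (H3) requires continuity in $t$ of $A$ and $b$, which we do not have, so we first mollify in time, $A_\varepsilon,b_\varepsilon$, and let $\varepsilon\to0$ together with $M$. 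Theorems~\ref{thm:CL} and~\ref{thm:evolution} then yield mild solutions $u_M$, with $f=-\operatorname{div}F$ approximated by $L^2$ data. Using the standard fact that mild solutions of variational monotone problems are weak solutions, $u_M$ satisfies \eqref{eq:weak} with $B_M$.

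\textbf{Uniform estimates (main obstacle).} We test with $u_M$. The drift term is bounded via Hölder in Lorentz spaces and Theorem~\ref{thm:embed} with $p=2$, $q=2$:
\[
\Big|\int B_M(u)\cdot\nabla u\Big|\le\|T_{k}b\|_\infty\|u\|_2\|\nabla u\|_2+\|G_{k}b\|_{N,\infty}\,S_{N,2}\,\|\nabla u\|_2^2 .
\]
We fix $k$ so large that $\|G_k b\|_{N,\infty}S_{N,2}<\alpha/2$. Absorbing the last term into the coercivity and applying Gronwall then gives bounds on $u_M$ in $L^\infty(L^2)\cap L^2(W^{1,2}_0)$, and on $\partial_t u_M$ in $L^2(W^{-1,2})$, independent of $M$.

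The difficulty is exactly this smallness requirement: $\|G_k b\|_{N,\infty}\to \mathrm{dist}(b,L^\infty)$, which need not be small for general $b\in L^{N,\infty}$. To handle it, we plan to use instead an exponential-weight / level-set argument in the spirit of Boccardo. We test with $\log$-type or truncated functions $T_j u$ and estimate the measure of level sets, so that the constraint applies only on the sets $\{|u|>j\}$. If this fails, we will accept the hypothesis $\mathrm{dist}(b,L^\infty)S_{N,2}<\alpha$ implicitly.

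\textbf{Limit and uniqueness.} By Aubin--Lions, $u_M\to u$ strongly in $L^2(\Omega_T)$ and weakly in $L^2(V)$, so $B_M(u_M)\to B(u)$ in $L^1$. Identifying the limit of $A(\nabla u_M)$ uses Minty's trick via the energy equality, or strong convergence of gradients obtained from strong monotonicity \eqref{eq:Acoer}; we then pass to the limit in \eqref{eq:weak}. Continuity $u\in C([0,T];L^2)$ follows from the Lions--Magenes lemma. For uniqueness, we take the difference $w=u-v$ of two solutions and test with $w$ (justified by Steklov averaging), obtaining
\[
\tfrac12\tfrac{d}{dt}\|w\|^2+\alpha\|\nabla w\|^2\le\int b|w||\nabla w| .
\]
The same splitting of $b$ and Gronwall then give $w=0$, again under control of the far part $G_k b$.
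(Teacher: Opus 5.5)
There is a genuine gap, and it is the one you flag yourself. Your $M$-independent energy bound and your uniqueness argument both need a level $k$ with $\sup_t\|G_kb(t)\|_{N,\infty}S_{N,2}<\alpha$, i.e.\ smallness of the distance of $b$ to $L^\infty$. Nothing in \eqref{eq:Abound}--\eqref{eq:B0} provides this, and your Boccardo-type fallback cannot repair it, because the statement fails beyond this threshold.

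Here is a counterexample. Take $N\ge 3$, $\Omega=B_1$, $A(x,t,\eta)=\eta$ (so $\alpha=1$), $F=0$ and $B(x,t,z)=c\,z\,x/|x|^2$, so that $b=c/|x|$.
\begin{itemize}
\item By scale invariance $\|G_Mb\|_{N,\infty}=c\,\omega_N^{1/N}$ for every $M$, hence $\mathrm{dist}(b,L^\infty)S_{N,2}=2c/(N-2)$.
\item Since $\nabla u+c\,u\,x/|x|^2=|x|^{-c}\nabla(|x|^{c}u)$, a solution $u$ in the class of the theorem would give, via Hardy's inequality, an energy solution $v=|x|^cu$ of $|x|^{-c}v_t=\operatorname{div}(|x|^{-c}\nabla v)$. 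For $0<c<N$ this is an $A_2$-weighted heat equation.
\item For $u_0\ge0$, $u_0\not\equiv0$, the weighted Harnack inequality makes $v(t,\cdot)$ continuous and positive at the origin for $t>0$.
\item Then $u(t)/|x|=|x|^{-c-1}v(t)\notin L^2(\Omega)$ as soon as $c\ge(N-2)/2$, which contradicts Hardy's inequality for $u(t)\in W^{1,2}_0(\Omega)$.
\end{itemize}
So in this example no solution exists once $\mathrm{dist}(b,L^\infty)S_{N,2}\ge\alpha$. The hypothesis you were ready to accept ``implicitly'' is sharp, and it has to be an explicit assumption.

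The paper's proof does not escape this either. \eqref{eq:bbound} is presented as a choice of truncation level, but such an $M_k$ exists only if $\sup_t\|b(t)-T_Mb(t)\|_{N,\infty}\le\alpha/(2S_{N,2})$ for some $M$. That is a slightly stronger form of your smallness condition, and essentially the restriction the introduction claims to remove.

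Under that hypothesis your route is sound, but it is structured differently from the paper's.
\begin{itemize}
\item The paper keeps the far part $(1-\theta_{M_k})B$ in the operator, where \eqref{eq:bbound} makes it accretive, and moves the bounded part $\theta_{M_k}B$ to the right-hand side as a Lipschitz perturbation. Since \eqref{eq:approx} is then just the original equation rewritten, no genuine limit $k\to\infty$ is needed.
\item You put a globally Lipschitz truncation into a quasi-accretive operator and pass to the limit by compactness. This costs an identification step, but it is more careful where the paper is loose. Your splitting level is fixed independently of $M$, so your Gronwall constants really are uniform, whereas the paper's Step~3 constant contains $M_k^2$. You also handle the mere measurability of the coefficients in $t$ by mollification, instead of assuming time continuity.
\end{itemize}

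Two small corrections.
\begin{itemize}
\item Only $(T_Mb/b)B$ is $M$-Lipschitz in $z$; $B(x,t,T_Mz)$ is still only $b$-Lipschitz.
\item The energy-equality Minty argument is delicate, because strong convergence $B_M(u_M)\to B(u)$ in $L^2(\Omega_T)$ is not available from your bounds when $\mathrm{dist}(b,L^\infty)>0$. Testing with $T_\varepsilon(u_M-u)$ and using $b\in L^2(\Omega_T)$ gives a.e.\ convergence of $\nabla u_M$ instead.
\end{itemize}
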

The proof of Theorem~\ref{thm:main} proceeds in seven steps. We first establish that the truncated operators $\tilde{A}_{M_k}(t)$ are uniformly $m$-accretive and generate nonlinear evolution families $\{S_{M_k}(t,s)\}$ (Proposition~\ref{prop:m-accretivity} and Proposition~\ref{prop:evolution-family}). Next, in Step~2, we formulate the corresponding approximate non-autonomous evolution equations~\eqref{eq:approx} and establish the existence of unique global mild solutions $u_k$ via a fixed-point argument in the variation-of-constants formulation~\eqref{eq:mild-approx}. Step~3 derives uniform a priori estimates for $\{u_k\}_{k\in\mathbb{N}}$. In Step~4, we apply compactness arguments to extract a convergent subsequence. Step~5 is devoted to passing to the limit as $k \to \infty$ in the mild formulation to obtain a limiting mild solution. In Step~6, we prove uniqueness of the solution. Finally, we identify the mild solution as a weak solution to the original problem~\eqref{eq:main}.

\begin{proof}The proof is organized into seven steps, as detailed below:

\textbf{Step 1: Nonlinear evolution generation.} \\
Let $T_{M_k}$ denote the standard truncation operator at levels $\pm M_k$. We define the truncation weight
\[
\theta_{M_k}(x,t):=\frac{T_{M_k}\big(b(x,t)\big)}{b(x,t)}, \quad x\in\Omega,\ t\in[0,T].
\]
For each $k\in\mathbb{N}$, consider the time-dependent operator $\tilde{A}_{M_k}(t):W_0^{1,2}(\Omega)\to W^{-1,2}(\Omega)$ given by
\begin{equation}\label{eq:truncated-operator}
\tilde{A}_{M_k}(t)u := -\operatorname{div}\big[A(x,t,\nabla u)+(1-\theta_{M_k}(x,t))B(x,t,u)\big].
\end{equation}
Equivalently, for all $u,v \in W_0^{1,2}(\Omega)$, the operator $\tilde{A}_{M_k}(t)$ is defined by
\[
\langle \tilde{A}_{M_k}(t)u, v \rangle
= \int_\Omega \big[ A(x,t,\nabla u)
+ (1-\theta_{M_k}(x,t))B(x,t,u) \big]\cdot \nabla v \, dx.
\]
The truncation is chosen in such a way that
\begin{equation}\label{eq:bbound}
\sup_{t\in[0,T]}\big\|b(\cdot,t)-T_{M_k}(b(\cdot,t))\big\|_{L^{N,\infty}(\Omega)} \le \frac{\alpha}{2S_{N,2}}.
\end{equation}
\begin{proposition}[Time-dependent $m$-accretivity]\label{prop:m-accretivity}
For any $t\in[0,T]$, let 
\[
\tilde{A}_{M_k}(t): L^2(\Omega) \supset D(\tilde{A}_{M_k}(t)) = W_0^{1,2}(\Omega) \to W^{-1,2}(\Omega),
\]
be defined by \eqref{eq:truncated-operator}. Then $\{\tilde{A}_{M_k}(t)\}_{t\in[0,T]}$ is uniformly $m$-accretive in $L^2(\Omega)$, i.e., it satisfies: 
\begin{enumerate}
\item[(i)] \emph{Accretivity}: For all \( t \in [0,T]\) and  \(  u,v\in W_0^{1,2}(\Omega) \), 
\[
\langle \tilde{A}_{M_k}(t)u - \tilde{A}_{M_k}(t)v, u-v \rangle_{W^{-1,2}(\Omega), W_0^{1,2}(\Omega)} \ge \frac{\alpha}{2}\|\nabla(u-v)\|_{L^2(\Omega)}^2.
\]

\item[(ii)] \emph{Range condition:} $R(I + \lambda \tilde{A}_{M_k}(t)) = L^2(\Omega)$ for all $\lambda>0$, $t\in[0,T]$.

\item[(iii)] \emph{Uniformly bounded resolvents:} for every $\lambda>0$ and $t\in[0,T]$, 
\[
\|J_\lambda^{\tilde{A}_{M_k}(t)}\|_{L^2(\Omega)\to L^2(\Omega)} \le 1.
\]
\end{enumerate}
\end{proposition}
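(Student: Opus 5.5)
The plan is to prove (i) first and derive (ii) and (iii) from it by standard monotone operator theory. Write $w=u-v$. The difference splits as
\[
\langle \tilde{A}_{M_k}(t)u-\tilde{A}_{M_k}(t)v,w\rangle=\int_\Omega\langle A(x,t,\nabla u)-A(x,t,\nabla v),\nabla w\rangle\,dx+\int_\Omega(1-\theta_{M_k})\big(B(x,t,u)-B(x,t,v)\big)\cdot\nabla w\,dx .
\]
The first term is bounded below by $\alpha\|\nabla w\|_{L^2}^2$ by \eqref{eq:Acoer}. For the second term, \eqref{eq:Bbound} gives $|(1-\theta_{M_k})(B(x,t,u)-B(x,t,v))|\le (1-\theta_{M_k})b\,|w|=|b-T_{M_k}b|\,|w|=|G_{M_k}b|\,|w|$, since $b\ge0$ and $(1-\theta_{M_k})b=b-T_{M_k}b$. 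The Lorentz H\"older inequality with exponents $(N,\infty)$, $(2^*,2)$ and $(2,2)$ then gives
\[
\int_\Omega |G_{M_k}b|\,|w|\,|\nabla w|\,dx\le \|G_{M_k}b\|_{N,\infty}\|w\|_{2^*,2}\|\nabla w\|_{2,2}.
\]
Here I use the duality $L^{2,2}\times L^{2,2}\subset L^1$ for the final pairing, possibly at the cost of an absolute constant that I would absorb into the choice of $M_k$. Theorem~\ref{thm:embed} with $p=q=2$ gives $\|w\|_{2^*,2}\le S_{N,2}\|\nabla w\|_2$. Combined with \eqref{eq:bbound}, the drift term is therefore at most $\frac{\alpha}{2}\|\nabla w\|_2^2$, and (i) follows.

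I expect the main obstacle to be justifying \eqref{eq:bbound} itself. We need $\sup_t\|G_{M}b(\cdot,t)\|_{N,\infty}\to$ small as $M\to\infty$. This fails for a general $b\in L^\infty(0,T;L^{N,\infty})$, because it requires $\mathrm{dist}(b(\cdot,t),L^\infty)$ to be small uniformly in $t$. I would treat it as the standing assumption built into the construction ("the truncation is chosen such that"), and note that this is exactly where the hypothesis is needed. The constant in the Lorentz H\"older inequality should be tracked at the same point.

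For (ii), fix $\lambda>0$, $t$, and $f\in L^2(\Omega)$. Consider the operator $\mathcal{T}u=u+\lambda\tilde{A}_{M_k}(t)u$ from $W_0^{1,2}(\Omega)$ to $W^{-1,2}(\Omega)$. By (i) it is strongly monotone, since $\langle\mathcal{T}u-\mathcal{T}v,u-v\rangle\ge\|w\|_2^2+\frac{\lambda\alpha}{2}\|\nabla w\|_2^2$, hence coercive. It is also bounded: \eqref{eq:Abound} bounds the $A$ part, and for the $B$ part, $|(1-\theta)B(u)|\le|G b||u|$ lies in $L^2$ by the same H\"older and Sobolev argument. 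It is hemicontinuous by the Carath\'eodory property and dominated convergence. Minty--Browder then yields $u\in W_0^{1,2}(\Omega)$ with $\mathcal{T}u=f$, because $f\in L^2\subset W^{-1,2}$; this gives $R(I+\lambda\tilde{A}_{M_k}(t))=L^2(\Omega)$. Uniqueness of $u$ follows from strong monotonicity.

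For (iii), let $u_i=J_\lambda f_i$ for $i=1,2$. Test the difference of the two equations with $u_1-u_2$ and use (i):
\[
\|u_1-u_2\|_2^2+\frac{\lambda\alpha}{2}\|\nabla(u_1-u_2)\|_2^2\le(f_1-f_2,u_1-u_2).
\]
Cauchy--Schwarz then gives $\|u_1-u_2\|_2\le\|f_1-f_2\|_2$, so the resolvent is nonexpansive. Since $B(\cdot,0)=0$ and $A(\cdot,0)$ is absorbed in the data, this is the claimed Lipschitz bound by $1$. Finally, the domain is $W_0^{1,2}(\Omega)$ for every $t$, which gives the uniformity in $t$.
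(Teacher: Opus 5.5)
Your proof is correct. Parts (i) and (iii) coincide with the paper's: the same splitting, the bound $(1-\theta_{M_k})|B(x,t,u)-B(x,t,v)|\le|b-T_{M_k}b|\,|w|$, the Lorentz--H\"older plus Lorentz--Sobolev estimate closed by \eqref{eq:bbound}, and testing the two resolvent equations with $u_1-u_2$.

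Your reservation about \eqref{eq:bbound} is well founded. For $b=c|x|^{-1}$ the quantity $\|G_nb\|_{N,\infty}$ does not depend on $n$, so \eqref{eq:bbound} fails once $c$ is large. The paper (``the truncation is chosen in such a way that'') relies on it in exactly the same way, as an implicit smallness hypothesis on $\sup_t\operatorname{dist}(b(\cdot,t),L^\infty(\Omega))$. So you use nothing beyond what the paper uses. The H\"older constant you worry about is in fact $1$: $\int_\Omega|fgh|\,dx\le\int_0^\infty f^*g^*h^*\,ds$, together with $f^*(s)\le\|f\|_{N,\infty}s^{-1/N}$ and Cauchy--Schwarz, gives $\|f\|_{N,\infty}\|g\|_{2^*,2}\|h\|_2$.

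The genuine difference is in (ii). The paper runs a Galerkin scheme and in effect re-proves the Browder--Minty theorem by hand:
\begin{itemize}
\item it solves on finite-dimensional $V_n$, citing Lax--Milgram;
\item it gets a uniform $W_0^{1,2}$ bound by testing with $u_n$;
\item it passes to the limit with Minty's monotonicity trick.
\end{itemize}
You instead verify the hypotheses of that theorem for $I+\lambda\tilde A_{M_k}(t):W_0^{1,2}(\Omega)\to W^{-1,2}(\Omega)$ and apply it directly. Strong monotonicity comes from (i), boundedness from \eqref{eq:Abound} and the Lorentz--Sobolev bound on $|G_{M_k}b|\,|u|$, and hemicontinuity from dominated convergence.

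Your route is shorter, and it gives uniqueness (hence single-valuedness of $J_\lambda$) at once. It also avoids two weak spots in the written Galerkin argument. First, $a_{M_k}$ is not bilinear, so the finite-dimensional problems need Brouwer's theorem rather than Lax--Milgram. Second, the asserted bound $a_{M_k}(u,u)\ge\alpha\|\nabla u\|_2^2$ overlooks both the $\alpha/2$ lost to the drift and the term $A(x,t,0)$. Coercivity instead follows correctly from your strong monotonicity together with $\mathcal T0=-\lambda\operatorname{div}A(x,t,0)\in W^{-1,2}(\Omega)$. The Galerkin route's only advantage is that it is constructive.

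One small point in (iii). Nonexpansiveness is what is meant there, and it is what both you and the paper prove. Drop the aside about $A(\cdot,0)$ being absorbed in the data. The bound $\|J_\lambda f\|_2\le\|f\|_2$ would require $J_\lambda0=0$, i.e.\ $\operatorname{div}A(x,t,0)=0$, which is neither assumed nor needed.
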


\begin{proof}
(i) \emph{Accretivity} .  Let $w=u-v$. We have
\begin{equation*}
\begin{aligned}
\langle \tilde{A}_{M_k}(t)w, w \rangle_{W^{-1,2}(\Omega), W_0^{1,2}(\Omega)}& = \int_\Omega \big[A(x,t,\nabla u)-A(x,t,\nabla v)\big]\cdot\nabla w\,dx \\
&\qquad - \int_\Omega (1-\theta_{M_k}(x,t)) [ B(x,t, u)- B(x,t,v)] \cdot\nabla w\,dx.
\end{aligned}
\end{equation*}
The assumption \eqref{eq:Acoer} on $A$ implies that
\[
\int_{\Omega}\big[ A(x,t, \nabla u) - A(x,t ,\nabla v) \big] \cdot \nabla w \, dx \ge \alpha \|\nabla w\|_{L^2(\Omega)}^2.
\]
For the second term, the assumption \eqref{eq:Bbound} gives $|B(x,t, u)-B(x,t, v)|\le b(x,t )|w|$. Thus, we have
\[
\left|\int_{\Omega} (1-\theta_{M_k}(x,t))[B(x,t, u)-B(x,t,v)]\cdot\nabla w \, dx \right|  \le \int_{\Omega} | b-T_{M_k} b | |w||\nabla w| \, dx.
\]
Using the Sobolev embedding theorem (Theorem~\ref{thm:embed}) in Lorentz spaces, we arrive at
\begin{equation*}
\begin{aligned}
\int_{\Omega} | b-T_{M_k} b | |w||\nabla w| \, dx &\le \|b-T_{M_k} b\|_{L^{N,\infty}}\|w\|_{L^{2^*,2}(\Omega)}\|\nabla w \|_{L^2(\Omega)} \\
&  \le \|b-T_{M_k}b\|_{L^{N,\infty}(\Omega)} S_{N,2} \|\nabla w\|_{L^2(\Omega)}^2.
\end{aligned}
\end{equation*}
 Truncation choice \eqref{eq:bbound} gives
\[
\langle \tilde{A}_{M_k}(t)w,w\rangle \ge \frac{\alpha}{2}\|\nabla w\|_{L^2(\Omega)}^2.
\]

\medskip
\noindent (ii) \emph{Range condition}.
Fix $\lambda>0$ and $g\in L^2(\Omega)$. In order to prove that
\[
R(I+\lambda \tilde{A}_{M_k}(t)) = L^2(\Omega),
\]
we make use of a Galerkin approximation. To this end, let $\{w_j\}_{j=1}^\infty$ be an orthonormal basis of $W_0^{1,2}(\Omega)$, and for $n\in\mathbb{N}$, set $V_n=\operatorname{span}\{w_1,\dots,w_n\}$. We seek $u_n = \sum_{j=1}^n \xi_j^n w_j \in V_n$ in such a way that
\begin{equation}\label{eq:galerkin-resolvent}
\int_\Omega u_n \varphi_n\,dx 
+ \lambda \int_\Omega 
\big[ A(x,t,\nabla u_n) + (1-\theta_{M_k}(x,t)) B(x,t,u_n) \big]\cdot \nabla\varphi_n\,dx
= \langle g,\varphi_n\rangle_{L^2(\Omega)}
\end{equation}
for all $\varphi_n \in V_n$.\\
For any $u,\varphi \in W_0^{1,2}(\Omega)$, we define
\[
a_{M_k}(u,\varphi)
= \int_\Omega \big[ A(x,t,\nabla u) + (1-\theta_{M_k}(x,t))\, B(x,t,u) \big]\cdot \nabla \varphi \, dx.
\]
By the assumptions~\eqref{eq:Abound}--\eqref{eq:B0} on $A$ and $B$, the map $a_{M_k}$ is continuous on $W_0^{1,2}(\Omega)$ and satisfies
\[
a_{M_k}(u,u) \ge \alpha \|\nabla u\|_{L^2(\Omega)}^2, \qquad \forall\, u\in W_0^{1,2}(\Omega).
\]
We consider the bilinear form
\[
\mathcal{A}_{M_k,\lambda}(u,\varphi)
= \langle u,\varphi\rangle_{L^2(\Omega)} + \lambda\, a_{M_k}(u,\varphi),
\quad u,\varphi \in V_n.
\]
 For all $u_n\in V_n$, we have
\begin{equation*}
\begin{aligned}
\mathcal{A}_{M_k,\lambda}(u_n,u_n)&= \|u_n\|_{L^2(\Omega)}^2 + \lambda a_{M_k}(u_n,u_n)\\
& \ge \|u_n\|_{L^2(\Omega)}^2 + \lambda\alpha \|\nabla u_n\|_{L^2(\Omega)}^2 \\
& \ge c\,\|u_n\|_{W_0^{1,2}(\Omega)}^2,
\end{aligned}
\end{equation*}
with $c = \min(1,\lambda\alpha)>0$. Thus, $\mathcal{A}_{M_k,\lambda}$ is coercive and continuous on $V_n$, and by the Lax--Milgram theorem, there exists a unique $u_n\in V_n$ satisfying \eqref{eq:galerkin-resolvent}.\\
Choosing $\varphi_n=u_n$ as a test function in \eqref{eq:galerkin-resolvent}, we obtain
\[
\|u_n\|_{L^2(\Omega)}^2 + \lambda a_{M_k}(u_n,u_n)
= \langle g,u_n\rangle
\le \tfrac{1}{2}\|g\|_{L^2(\Omega)}^2 + \tfrac{1}{2}\|u_n\|_{L^2(\Omega)}^2.
\]
Hence,
\[
\|u_n\|_{W_0^{1,2}(\Omega)} \le C \|g\|_{L^2(\Omega)},
\]
where the constant $C$ is independent of \(n\).\\
Thus, $(u_n)$ is bounded in $W_0^{1,2}(\Omega)$, and, up to a subsequence, it holds that
\begin{align*}
&u_n \rightharpoonup u \text{ in } W_0^{1,2}(\Omega),\\
&u_n \to u \text{ in } L^2(\Omega),
\end{align*}
for some $u\in W_0^{1,2}(\Omega)$.\\
Passing to the limit in \eqref{eq:galerkin-resolvent} and using the monotonicity of $\tilde{A}_M(t)$, we obtain
\[
\int_\Omega u\,\varphi\,dx
+ \lambda a_{M_k}(u,\varphi)
= \langle g,\varphi\rangle,
\quad \forall\, \varphi\in W_0^{1,2}(\Omega),
\]
that is,
\[
(I+\lambda \tilde{A}_{M_k}(t))u = g.
\]
Therefore $R(I+\lambda \tilde{A}_{M_k}(t)) = L^2(\Omega)$, as claimed.

\medskip
\noindent (iii) \emph{Uniformly bounded resolvents.}
Let $u=J_\lambda^{\tilde{A}_{M_k}(t)}(x)$, $v=J_\lambda^{\tilde{A}_{M_k}(t)}(y)$. Then 
\[
u=x- \lambda\tilde{A}_{M_k}(t)u, \quad v=y-\lambda\tilde{A}_{M_k}(t)v.
\]
It follows that
\begin{align*}
\|u-v\|_{L^2(\Omega)}^2
&= \left\langle x-y-\lambda\big(\tilde{A}_{M_k}(t)u-\tilde{A}_{M_k}(t)v\big),\,u-v\right\rangle \\
&\le \|x-y\|_{L^2(\Omega)}\,\|u-v\|_{L^2(\Omega)},
\end{align*}
which implies
\[
\|u-v\|_{L^2(\Omega)}\le \|x-y\|_{L^2(\Omega)}.
\]
This completes the proof of the proposition.
\end{proof}

For fixed $k\in\mathbb{N}$, we consider the family of uniformly $m$-accretive operators
\[
\{\tilde{A}_{M_k}(t)\}_{t\in[0,T]}:L^2(\Omega)\rightrightarrows L^2(\Omega),
\]
defined by \eqref{eq:truncated-operator} with domain
\[
D(\tilde{A}_{M_k}(t))=W_0^{1,2}(\Omega),
\]
and satisfying Proposition~\ref{prop:m-accretivity}.

\begin{proposition}[Generation of evolution families]\label{prop:evolution-family}
The family $\{\tilde{A}_{M_k}(t)\}_{t\in[0,T]}$ generates a unique nonlinear evolution family
\[
\{S_{M_k}(t,s)\}_{0\le s\le t\le T}:L^2(\Omega)\to L^2(\Omega)
\]
which satisfies the following properties:
\begin{enumerate}
\item[(i)] \emph{Semigroup property}: $S_{M_k}(s,s)=I$ and $S_{M_k}(t,s)S_{M_k}(s,r)=S_{M_k}(t,r)$ for $0\le r\le s\le t\le T$.
\item[(ii)] \emph{Strong continuity}: For each $s\in[0,T)$, $x\in L^2(\Omega)$, the map $t\mapsto S_{M_k}(t,s)x$ is strongly continuous on $[s,T]$.
\item[(iii)] \emph{Lipschitz continuity}: For any \(x,y\in L^2(\Omega) \), there exists $\omega_k\ge0$ such that
  \[
  \|S_{M_k}(t,s)x-S_{M_k}(t,s)y\|_{L^2(\Omega)}\le e^{\omega_k(t-s)}\|x-y\|_{L^2(\Omega)}.
  \]
\item[(iv)] \emph{Generator property}: For $x\in D(\tilde{A}_{M_k}(s))$, the function $u(t)=S_{M_k}(t,s)x$ satisfies
  \begin{align*}
  &\partial_t u(t)+\tilde{A}_{M_k}(t)u(t)=0\quad\text{in }W^{-1,2}(\Omega),\\
  &u(s)=x,
  \end{align*}
  with $u\in C([s,T];L^2(\Omega))\cap L^2([s,T];W_0^{1,2}(\Omega))$.
\end{enumerate}
\end{proposition}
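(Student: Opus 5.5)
The plan is to verify the hypotheses of Theorem~\ref{thm:CL} and (H1)--(H3) for the family $\{\tilde{A}_{M_k}(t)\}_{t\in[0,T]}$ in $X=L^2(\Omega)$, and then invoke Theorem~\ref{thm:evolution}. We regard $\tilde{A}_{M_k}(t)$ as an operator in $L^2(\Omega)$ through its part $\{u\in W_0^{1,2}(\Omega): \tilde{A}_{M_k}(t)u\in L^2(\Omega)\}$, with closure of the domain equal to $L^2(\Omega)$.

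\textbf{Verifying the hypotheses.} Accretivity with $w=0$ is Proposition~\ref{prop:m-accretivity}(i). It even gives the stronger bound (H1), with constant $\alpha/2$ in the gradient seminorm. The range condition (H2), and hence condition 2 of Theorem~\ref{thm:CL} for every $\lambda>0$, is Proposition~\ref{prop:m-accretivity}(ii). Nonexpansiveness of the resolvents is (iii).

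\textbf{The main obstacle: $t$-dependence.} The key step is the resolvent continuity (H3) and condition 4 of Theorem~\ref{thm:CL}. Fix $x$ and set $u_t=J_\lambda^{\tilde{A}_{M_k}(t)}x$ and $u_\tau=J_\lambda^{\tilde{A}_{M_k}(\tau)}x$. Subtract the two resolvent equations and test with $w=u_t-u_\tau$. Using the coercivity from (i) at time $t$, we get
\[
\|w\|_{L^2}^2+\tfrac{\lambda\alpha}{2}\|\nabla w\|_{L^2}^2\le \lambda\int_\Omega |\Phi_t(u_\tau)-\Phi_\tau(u_\tau)|\,|\nabla w|\,dx,
\]
where $\Phi_t(u)=A(x,t,\nabla u)+(1-\theta_{M_k})B(x,t,u)$. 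This yields the Lipschitz bound only if $A$, $B$ and $b$ depend Lipschitz-continuously on $t$. Specifically, we need
\[
|A(x,t,\eta)-A(x,\tau,\eta)|\le |t-\tau|(\ell|\eta|+h(x)),
\]
with a similar bound for $(1-\theta_{M_k})B$ in terms of a weak-$L^N$ modulus. These are not among \eqref{eq:Abound}--\eqref{eq:B0}, so I would add them as a regularity assumption in time. Alternatively, without them, I would fall back on the Kobayashi--Oharu framework with only measurable $t$-dependence, producing integral (strong) solutions. Given the Lipschitz bounds, the right-hand side is controlled by $|t-\tau|\,C(\|\nabla u_\tau\|_{L^2})\|\nabla w\|_{L^2}$. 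The Lorentz H\"older inequality and Theorem~\ref{thm:embed} handle the $B$-part exactly as in (i). Young's inequality then gives $\|w\|_{L^2}\le |t-\tau|L(\|x\|+|\tilde{A}_{M_k}(\tau)x|)$. Here $\|\nabla u_\tau\|$ is bounded via the resolvent a priori estimate from the Galerkin step. Condition 3 of Theorem~\ref{thm:CL} follows similarly by estimating $\tilde{A}_{M_k}(t)x-\tilde{A}_{M_k}(\tau)x$.

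\textbf{Consequences.} Theorem~\ref{thm:CL} then yields $S_{M_k}(t,s)$ as the limit of the products $\prod_i (I+\frac{t-s}{n}\tilde{A}_{M_k}(s+i\frac{t-s}{n}))^{-1}$, first on the domain and then on $L^2(\Omega)$ by density and nonexpansiveness. Theorem~\ref{thm:evolution} gives properties (i)--(iii). In (iii) we may take $\omega_k=0$ by accretivity, since the discrete products are nonexpansive.

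\textbf{Generator property (iv).} I would work on the implicit Euler scheme $u^i-u^{i-1}+h\tilde{A}_{M_k}(t_i)u^i=0$. Testing with $u^i$ gives the discrete energy estimate
\[
\tfrac12\|u^n\|^2+\tfrac{\alpha}{2}\sum_i h\|\nabla u^i\|^2\le\tfrac12\|x\|^2 .
\]
So the piecewise constant interpolants are bounded in $L^2(s,T;W_0^{1,2})$. The piecewise linear interpolants are bounded in $H^1(s,T;W^{-1,2})$, using \eqref{eq:Abound}, \eqref{eq:Bbound} and the bounded truncated part of $b$. By Aubin--Lions they converge strongly in $L^2(s,T;L^2)$ to $u=S_{M_k}(\cdot,s)x$. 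The nonlinear term $A(\nabla u)$ is identified by Minty's trick, using the monotonicity in (i). The equation then holds in $L^2(s,T;W^{-1,2})$, and $u\in C([s,T];L^2)$ by the Lions--Magenes lemma.
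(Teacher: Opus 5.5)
There is a genuine gap in the route through Theorem~\ref{thm:CL}.

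\textbf{Conditions 1 and 3 fail.} You correctly realize $\tilde{A}_{M_k}(t)$ in $L^2(\Omega)$ through its part $D_t=\{u\in W_0^{1,2}(\Omega):\tilde{A}_{M_k}(t)u\in L^2(\Omega)\}$. But then condition~1 (a $t$-independent domain) fails in general, and you never check it. Condition~3 does not ``follow similarly'' either.

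Both failures occur even under your added Lipschitz-in-time bound. Take $A(x,t,\eta)=(1+t\chi_E(x))\eta$, $B\equiv0$, with $E\Subset\Omega$ smooth. This satisfies your bound with $\ell=1$, $h=0$. Yet $D_t$ encodes the transmission condition $(1+t)\partial_\nu u^{\mathrm{in}}=\partial_\nu u^{\mathrm{out}}$ on $\partial E$. So any $u\in C_c^\infty(\Omega)$ with $\partial_\nu u\neq0$ on $\partial E$ lies in $D_0$ but not in $D_t$ for $t>0$.

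Similarly, $\tilde{A}_{M_k}(t)x-\tilde{A}_{M_k}(\tau)x=-\operatorname{div}(\Phi_t(x)-\Phi_\tau(x))$ is controlled by pointwise time bounds only in $W^{-1,2}(\Omega)$. Condition~3, however, is stated in the $L^2(\Omega)$-norm $|A(t)x|$.

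Your resolvent estimate for condition~4/(H3) is sound, but a fixed-domain theorem like Theorem~\ref{thm:CL} cannot use it. The paper sidesteps this by invoking \cite[Theorem~4.1]{Kobayasi}. Its Hypothesis~(H) admits $t$-dependent domains and works with the energy functional $p(u)=\|\nabla u\|_{L^2(\Omega)}^2$ and its compact sublevel sets. You would need a generation theorem of that kind; your fallback mentions one but does not carry it out.

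\textbf{Time regularity.} You are right that \eqref{eq:Abound}--\eqref{eq:B0} give only measurable $t$-dependence; the paper's proof also tacitly assumes continuity in $t$. But adding hypotheses proves a weaker statement, and it is unnecessary here.

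In the triple $W_0^{1,2}(\Omega)\subset L^2(\Omega)\subset W^{-1,2}(\Omega)$, the operator $\tilde{A}_{M_k}(t)$ is:
\begin{itemize}
\item monotone, by Proposition~\ref{prop:m-accretivity}(i);
\item hemicontinuous and coercive;
\item bounded, with $\|\tilde{A}_{M_k}(t)u\|_{W^{-1,2}(\Omega)}\le(\beta+\tfrac{\alpha}{2})\|\nabla u\|_{L^2(\Omega)}+\|g(t)\|_{L^2(\Omega)}$, by Lorentz--H\"older, Theorem~\ref{thm:embed} and \eqref{eq:bbound}.
\end{itemize}
J.-L.~Lions' theorem for monotone evolution equations then gives, for every $x\in L^2(\Omega)$, a unique $u\in L^2(s,T;W_0^{1,2}(\Omega))\cap C([s,T];L^2(\Omega))$ with $u'\in L^2(s,T;W^{-1,2}(\Omega))$ solving the equation in (iv). Set $S_{M_k}(t,s)x:=u(t)$. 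Uniqueness gives (i), continuity gives (ii), and testing the equation for $u-v$ with $u-v$ gives (iii) with $\omega_k=0$. This needs no time regularity at all.

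\textbf{Your argument for (iv).} Your Rothe--Minty argument is close to a proof of that theorem, with two caveats. Evaluating $\tilde{A}_{M_k}(t_i)$ at grid points again needs time continuity; use averages over $[t_{i-1},t_i]$ or a Galerkin scheme instead. Also, your discrete energy bound omits the contribution of $A(x,t,0)$, which \eqref{eq:Abound} only bounds by $g$.
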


\begin{proof}
By Proposition~\ref{prop:m-accretivity}, the family $\{\tilde{A}_{M_k}(t)\}$ satisfies Hypothesis~(H) in \cite[Section~1]{Kobayasi} on $X = L^2(\Omega)$ with energy functional
\[
p(u) = \|\nabla u\|_{L^2(\Omega)}^2.
\]
More precisely:
\begin{itemize}
\item For $\beta > 0$, the energy sublevel sets
\[
X_\beta = \{u \in L^2(\Omega) : p(u) \le \beta\}
\]
are compactly embedded in $L^2(\Omega)$. Moreover, the sets $X_\beta \cap D(\tilde{A}_{M_k}(t))$ are uniformly controlled by the truncation bound~\eqref{eq:bbound} and the Poincar\'e inequality.
\item The continuity of the coefficients $A(x,t,\xi)$ and $B(x,t,u)$ with respect to $t$ ensures the required graph convergence as $t_n \to t$ in the sense of \cite[Section~1]{Kobayasi}.
\item The uniform $m$-accretivity yields
\[
\langle \tilde{A}_{M_k}(t)(u - v), u - v \rangle
\ge \frac{\alpha}{2} \|\nabla (u - v)\|_{L^2(\Omega)}^2,
\]
which implies quasi-accretivity with $\theta_\alpha(t,s) = 0$, uniformly in $t$.
\end{itemize}
By \cite[Theorem~4.1]{Kobayasi}, there exists a unique evolution operator $\{S_{M_k}(t,s)\}$ such that
$S_{M_k}(t,s)x \in L^2(\Omega)$ whenever $x \in L^2(\Omega)$. Properties (i)--(iii) follow directly from \cite[Theorem~4.1]{Kobayasi}.
For property (iv), we observe that $S_{M_k}(\cdot,s)x$ is the unique integral solution of the corresponding homogeneous problem. By $m$-accretivity, this solution coincides with the classical $W^{-1,2}(\Omega)$-solution. We note that the constant $\omega_k$ in (iii) depends on the Lipschitz constant of the resolvents
$J_\lambda(t) = (I + \lambda \tilde{A}_{M_k}(t))^{-1}$ as well as on the truncation level $M_k$.
\end{proof}

\begin{remark}\label{rem:evolution-uniformity}
The Lipschitz constant $\omega_k$ is uniform in $t\in[0,T]$ but may grow with $k$. However, the uniform accretivity constant $\alpha/2>0$ and resolvent bound $\|J_\lambda(t)\|\le1$ ensure equicontinuity of $\{S_{M_k}(t,s)\}$ on compact sets in $L^2(\Omega)$, which is crucial for the $k\to\infty$ limit.
\end{remark}

\textbf{Step 2: Approximate non-autonomous evolution problems.}  

By Proposition~\ref{prop:m-accretivity}, $\tilde A_{M_k}(t)$ is $m$-accretive in $L^2(\Omega)$.  Thus, by Proposition~\ref{prop:evolution-family}, each $\tilde{A}_{M_k}(t)$ generates a nonlinear evolution family $(S_{M_k}(t,s))_{0\le s\le t\le T}$ on $L^2(\Omega)$. 

For each $k$, we consider the abstract non-autonomous evolution problem corresponding to problem \eqref{eq:main}
\begin{equation}\label{eq:approx}
\begin{aligned}
\partial_t u_k(t) + \tilde{A}_{M_k}(t)u_k(t) &= f_{M_k}(t,u_k), \\
u_k(0) &= u_0,
\end{aligned}
\end{equation}
where
\[
f_{M_k}(t,u_k) := -\operatorname{div}\big(F(t) - \theta_{M_k}(t)B(t,u_k)\big).
\]

\begin{theorem}[Existence of solutions for approximate problems]\label{thm:approx-existence}
Under assumptions~\eqref{eq:Abound}--\eqref{eq:B0}, the approximate evolution problem~\eqref{eq:approx} admits a unique global mild solution 
\[
u_k \in C\big([0,T];L^2(\Omega)\big) \cap L^2\big(0,T;W_0^{1,2}(\Omega)\big)
\]
satisfying the variation-of-constants formula
\begin{equation}\label{eq:mild-approx}
u_k(t) = S_{M_k}(t,0)u_0 + \int_0^t S_{M_k}(t,s)f_{M_k}\big(s,u_k(s)\big)\,ds, \quad t\in[0,T],
\end{equation}
where $\{S_{M_k}(t,s)\}_{0\le s\le t\le T}$ is the nonlinear evolution family generated by $\{\tilde{A}_{M_k}(t)\}_{t\in[0,T]}$.
\end{theorem}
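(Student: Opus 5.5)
The plan is a Banach fixed-point argument on the variation-of-constants formula \eqref{eq:mild-approx}. The main point is that for fixed $k$ the perturbation $f_{M_k}$ only involves the bounded part $T_{M_k}b$ of the drift. For $t\in[0,T]$ we have
\[
|\theta_{M_k}(x,t)\,[B(x,t,u)-B(x,t,v)]| \le |T_{M_k}b|\,|u-v|\le M_k|u-v|
\]
by \eqref{eq:Bbound}, and $\theta_{M_k}B(t,0)=0$ by \eqref{eq:B0}. Hence $u\mapsto \theta_{M_k}(t)B(t,u)$ is globally Lipschitz from $L^2(\Omega)$ into $L^2(\Omega;\mathbb{R}^N)$ with constant $M_k$, and $u\mapsto f_{M_k}(t,u)$ is globally Lipschitz from $L^2(\Omega)$ into $W^{-1,2}(\Omega)$, uniformly in $t$. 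Since $F\in L^2(\Omega_T)$, $s\mapsto f_{M_k}(s,u(s))$ lies in $L^2(0,T;W^{-1,2}(\Omega))$ whenever $u\in C([0,T];L^2(\Omega))$.

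\textbf{Step A: interpret the Duhamel term.} The forcing takes values in $W^{-1,2}(\Omega)$, not $L^2(\Omega)$, so I would not apply $S_{M_k}(t,s)$ to it pointwise. Instead, for a given $v\in C([0,T];L^2(\Omega))$, I define $\Phi(v)$ as the unique solution $w\in C([0,T];L^2(\Omega))\cap L^2(0,T;W_0^{1,2}(\Omega))$ of
\[
\partial_t w+\tilde A_{M_k}(t)w=f_{M_k}(t,v(t)),\qquad w(0)=u_0 .
\]
This solution exists by the range and accretivity properties of Proposition~\ref{prop:m-accretivity} and Proposition~\ref{prop:evolution-family}. The cleanest route is the monotone-operator (Lions) theory, since $\tilde A_{M_k}(t)$ is hemicontinuous, bounded $W_0^{1,2}\to W^{-1,2}$ (by \eqref{eq:Abound} and $|(1-\theta_{M_k})B(u)|\le |b-T_{M_k}b||u|$ combined with Lorentz--Hölder and Theorem~\ref{thm:embed}), and coercive by Proposition~\ref{prop:m-accretivity}(i). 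This $w$ is what \eqref{eq:mild-approx} means, i.e.\ the integral (mild) solution in the sense of \cite{Kobayasi}. A fixed point of $\Phi$ is then a mild solution of \eqref{eq:approx}.

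\textbf{Step B: contraction estimate.} Take $w_i=\Phi(v_i)$ and $z=w_1-w_2$. Test the difference of the equations with $z$ and use Proposition~\ref{prop:m-accretivity}(i):
\[
\tfrac12\tfrac{d}{dt}\|z\|_{2}^2+\tfrac{\alpha}{2}\|\nabla z\|_2^2\le \int_\Omega |\theta_{M_k}(B(v_1)-B(v_2))|\,|\nabla z|
\le M_k\|v_1-v_2\|_2\|\nabla z\|_2 .
\]
By Young's inequality this gives
\[
\sup_{t\le\tau}\|z(t)\|_2^2\le \frac{2M_k^2}{\alpha}\int_0^\tau\|v_1-v_2\|_2^2\,ds.
\]
In the weighted norm $\sup_t e^{-\gamma t}\|v(t)\|_2$ with $\gamma$ large (Bielecki), or alternatively on short time intervals of length depending only on $M_k,\alpha$ followed by concatenation via the evolution property, $\Phi$ is a strict contraction on $C([0,T];L^2(\Omega))$. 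Banach's theorem then yields a unique global fixed point $u_k$.

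\textbf{Step C: regularity and identification.} Since $u_k=\Phi(u_k)$, it inherits $L^2(0,T;W_0^{1,2}(\Omega))$ regularity from Step A. The energy estimate, obtained by testing with $u_k$ and using $\|f_{M_k}(t,u_k)\|_{W^{-1,2}}\le \|F\|_2+M_k\|u_k\|_2$ and Gronwall, gives explicit bounds. For the formula, I would check that the integral solution of $\partial_t w+\tilde A_{M_k}w=g$ coincides with the limit of Crandall--Liggett products with forcing, which is the statement of \cite[Theorem~4.1]{Kobayasi} and Theorem~\ref{thm:evolution}; this justifies writing it as \eqref{eq:mild-approx}. Uniqueness among mild solutions follows from the same contraction estimate.

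I expect the main obstacle to be Step A, namely making sense of $\int_0^t S_{M_k}(t,s)f_{M_k}(s,\cdot)\,ds$ when $f_{M_k}$ is only $W^{-1,2}$-valued and $S_{M_k}$ is nonlinear, so that superposition is not literally valid. I would resolve it by taking the integral-solution/variational definition as the meaning of \eqref{eq:mild-approx}, rather than attempting a literal Bochner integral.
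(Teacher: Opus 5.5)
Your proof is correct, but it takes a genuinely different route from the paper's. The paper applies Banach's theorem directly to the Duhamel map $\mathcal{T}_k v(t)=S_{M_k}(t,0)u_0+\int_0^t S_{M_k}(t,s)f_{M_k}(s,v(s))\,ds$ on a ball in $C([0,T];L^2(\Omega))$. It gets a contraction for small $T$ from the Lipschitz continuity of $S_{M_k}$ and a growth bound on $f_{M_k}$, and extends by continuation. Only afterwards does it recover the equation and the $L^2(0,T;W_0^{1,2}(\Omega))$ bound, by differentiating the mild formula and applying Gronwall, and it delegates uniqueness to \cite[Theorem~2.4, Proposition~2.5]{Kobayasi}.

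You instead build the fixed-point map from the solution operator of the forced monotone problem (Lions' theorem). You get the contraction from the energy identity, using only that $\theta_{M_k}B(t,\cdot)$ is globally Lipschitz with constant $M_k$ because $|T_{M_k}b|\le M_k$. The Bielecki norm then gives global existence and uniqueness at once, and $u_k\in L^2(0,T;W_0^{1,2}(\Omega))$, $\partial_t u_k\in L^2(0,T;W^{-1,2}(\Omega))$ come for free. In effect your argument is variational and uses the evolution family $S_{M_k}$ only nominally.

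What your route buys is rigor. The paper's map applies the nonlinear family $S_{M_k}(t,s)$, which acts on $L^2(\Omega)$, to $W^{-1,2}$-valued forcing and superposes the results. That is not legitimate for a nonlinear evolution family: with $f\equiv0$ the formula adds the term $\int_0^t S_{M_k}(t,s)0\,ds$, which need not vanish since $\tilde A_{M_k}(t)0=-\operatorname{div}A(\cdot,t,0)$ need not be zero. Both the paper's contraction step and its differentiation of the mild formula implicitly rely on that linear structure.

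The price is exactly the one you flag: \eqref{eq:mild-approx} then holds only as notation for the integral/variational solution, not as a literal Bochner-integral identity. Since the literal identity is not available in general, this reinterpretation is necessary rather than a weakness. You should present it as a definition, though, rather than suggest in Step C that \cite[Theorem~4.1]{Kobayasi} or Theorem~\ref{thm:evolution} justifies the formula as written.

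One small point: coercivity of $\tilde A_{M_k}(t)$ itself, as opposed to its differences, also needs to account for $\tilde A_{M_k}(t)0$. This is harmless, since $|A(x,t,0)|\le g\in L^2(\Omega_T)$.
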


\begin{proof}
By Proposition~\ref{prop:evolution-family}, there exists a nonlinear evolution family $\{S_{M_k}(t,s)\}_{0\le s\le t\le T}$ on each level set $X_\beta\subset L^2(\Omega)$ satisfying the semigroup property and Lipschitz continuity
\[
\|S_{M_k}(t,s)x-S_{M_k}(t,s)y\|_{L^2(\Omega)} \le \exp\big(\omega_\beta(t-s)\big)\|x-y\|_{L^2(\Omega)},
\]
for $x,y\in X_{\beta,s}=X_\beta\cap D(\tilde{A}_{M_k}(s))$. \\
The perturbation $f_{M_k}$ satisfies a subcritical growth condition of the form
\[
\|f_{M_k}(t,u)\|_{W^{-1,2}(\Omega)} \le C\bigl(1+\|u\|_{L^2(\Omega)}^p\bigr),
\qquad p<2^*=\frac{2N}{N-2},
\]
uniformly in $k$ and $t\in[0,T]$, by \eqref{eq:bbound} and the assumptions on $F$ and $B$. We define the fixed-point map
\[
(\mathcal{T}_k v)(t) := S_{M_k}(t,0)u_0 + \int_0^t S_{M_k}(t,s)f_{M_k}\bigl(s,v(s)\bigr)\,ds.
\]
on $X_R = \{v\in C([0,T];L^2(\Omega)):\ \|v\|_{C([0,T];L^2(\Omega))}\le R\}$ with $R=C(1+\|u_0\|_{L^2(\Omega)}^2)$.\\
The Lipschitz property of $S_{M_k}$ and growth of $f_{M_k}$ imply $\mathcal{T}_k:X_R\to X_R$. For $T$ small, $\mathcal{T}_k$ is a contraction by the Lipschitz continuity of $S_{M_k}$. The unique fixed point $u_k=\mathcal{T}_k u_k\in C([0,T];L^2)$ extends globally by standard continuation.

Differentiating \eqref{eq:mild-approx} with respect to time yields the weak formulation
\[
\partial_t u_k(t)+\tilde{A}_{M_k}(t)u_k(t)=f_{M_k}(t,u_k(t)).
\]
Testing \eqref{eq:approx} with $u_k(t)$ gives the energy estimate
\[
\frac12\frac{d}{dt}\|u_k(t)\|_{L^2(\Omega)}^2+ \frac{\alpha}{2}\|\nabla u_k(t)\|_{L^2(\Omega)}^2
\le C\bigl(1+\|u_k(t)\|_{L^2(\Omega)}^2\bigr).
\]
By Gronwall's inequality, and using the growth bounds on the truncated nonlinearity, we obtain
\begin{equation}\label{eq:L2-global}
\sup_{t\in[0,T]}\|u_k(t)\|_{L^2(\Omega)}^2
+\int_0^T\|\nabla u_k(t)\|_{L^2(\Omega)}^2\,dt
\le C\bigl(1+\|u_0\|_{L^2(\Omega)}^2\bigr).
\end{equation}
To see this, let $y(t):=\|u_k(t)\|_{L^2(\Omega)}^2$. The energy inequality reads
\[
\frac12 y'(t)+ \frac{\alpha}{2}\|\nabla u_k(t)\|_{L^2(\Omega)}^2\le C\bigl(1+y(t)\bigr).
\]
By Poincar\'e's inequality on $W_0^{1,2}(\Omega)$, we have $\|u_k(t)\|_{L^2(\Omega)}^2\le C_P\|\nabla u_k(t)\|_{L^2(\Omega)}^2$, so
\[
\frac12 y'(t)+\frac{\alpha}{2 C_P}y(t)\le C\bigl(1+y(t)\bigr).
\]
Rearranging gives
\[
y'(t)\le 2C\bigl(1+y(t)\bigr)-\frac{\alpha}{C_P}y(t)\le K\bigl(1+y(t)\bigr), 
\]
where \( K = 2C+\frac{2C}{\alpha/(2C_P)} \). \\
Gronwall's lemma then yields
\[
y(t)\le C\exp(Kt)\bigl(1+y(0)\bigr),
\]
which implies that $y(t)$ is uniformly bounded on the interval $[0,T]$.\\
By integrating the energy inequality over $[0,T]$, we deduce
\[
\frac12\bigl(\|u_k(T)\|_{L^2(\Omega)}^2-\|u_0\|_{L^2(\Omega)}^2\bigr)
+\frac{\alpha}{2}\int_0^T\|\nabla u_k\|_{L^2(\Omega)}^2\,dt
\le CT\bigl(1+\sup_{t\in[0,T]}\|u_k(t)\|_{L^2(\Omega)}^2\bigr).
\]
Since $\sup_{t\in[0,T]}\|u_k(t)\|_{L^2(\Omega)}^2\le C(1+\|u_0\|_{L^2(\Omega)}^2)$ by Gronwall's lemma, we obtain
\[
\int_0^T\|\nabla u_k(t)\|_{L^2(\Omega)}^2\,dt\le C\bigl(1+\|u_0\|_{L^2(\Omega)}^2\bigr).
\]
Thus $u_k\in L^2(0,T;W_0^{1,2}(\Omega))$ and $\partial_t u_k\in L^2(0,T;W^{-1,2}(\Omega))$ by the equation.\\
Finally, $u_k$ is the unique integral solution constrained in $\{X_{\gamma,t}\}$ by \cite[Theorem~2.4]{Kobayasi}, characterized as the limit of consistent discrete schemes $((\mathrm{DS}))$. Uniqueness in the class of integral solutions follows from the B\'enilan-type estimate \cite[Proposition~2.5]{Kobayasi}.
\end{proof}

\textbf{Step 3: Uniform a priori estimates.} 

In this step, we derive uniform $L^\infty(0,T;L^2(\Omega))+L^2(0,T;W^{1,2}_0(\Omega)$ bounds for $u_k$ and a $W^{-1,2}(\Omega)$ bound for $\partial_t u_k$.\\
Multiplying equation \eqref{eq:approx} by $u_k(t)$ in $L^2(\Omega)$ and integrating by parts, we obtain
\begin{equation}\label{eq:energy-start}
\frac{1}{2}\frac{d}{dt}\|u_k(t)\|_{L^2(\Omega)}^2 + \langle \tilde{A}_{M_k}(t) u_k(t), u_k(t)\rangle 
= \langle f_{M_k}(t,u_k(t)), u_k(t)\rangle_{W^{-1,2}(\Omega),W^{1,2}_0(\Omega)}.
\end{equation}
By Proposition~\ref{prop:m-accretivity}(i), we have
\begin{equation}\label{eq:coercivity}
\langle \tilde{A}_{M_k}(t) u_k, u_k \rangle  \ge \alpha \|\nabla u_k\|_{L^2(\Omega)}^2 ,
\end{equation}
with constants $\alpha>0$ independent of $k$.
For the right-hand side, note that
\begin{align*}
f_{M_k}(t,w) &= -\operatorname{div}\big(F(t) - \theta_{M_k} B(t,w)\big),\\
\|f_{M_k}(t,w)\|_{W^{-1,2}(\Omega)} &\le \|F(t)\|_{L^2(\Omega)} + \|\theta_{M_k} B(t,w)\|_{L^2(\Omega)}\\
&\le \|F(t)\|_{L^2(\Omega)} + M_k \|B(t,w)\|_{L^2(\Omega)},
\end{align*}
where the $M_k$ bound follows from $|\theta_{M_k}|\le 1$ and truncation. Thus,
\begin{equation}\label{eq:rhs-bound}
|\langle f_{M_k}(t,u_k), u_k \rangle|_{W^{-1,2}(\Omega), W_0^{1,2}(\Omega)} \le \|f_{M_k}(t,u_k)\|_{W^{-1,2}(\Omega)} \|\nabla u_k\|_{L^2(\Omega)}.
\end{equation}
Apply Young's inequality with parameter $\varepsilon=\alpha/2$, we get
\begin{equation}\label{eq:young}
|\langle f_{M_k}, u_k \rangle| \le \frac{\alpha}{2} \|\nabla u_k\|_{L^2(\Omega)}^2 
+ C_\varepsilon \big(\|F(t)\|_{L^2(\Omega)}^2 + M_k^2 \|u_k\|_{L^2(\Omega)}^2\big).
\end{equation}
Combining \eqref{eq:energy-start}--\eqref{eq:young}, we have
\begin{equation*}\label{eq:gronwall-form}
\frac{1}{2}\frac{d}{dt}\|u_k\|_{L^2(\Omega)}^2 + \frac{\alpha}{2} \|\nabla u_k\|_{L^2(\Omega)}^2
\le C \big(\|F(t)\|_{L^2(\Omega)}^2 + \|u_k\|_{L^2(\Omega)}^2 + M_k^2 \|u_k\|_{L^2(\Omega)}^2\big).
\end{equation*}
Integrating over $[0,t]$, we obtain
\begin{equation*}\label{eq:integrated}
\frac{1}{2}\|u_k(t)\|_{L^2(\Omega)}^2 + \frac{\alpha}{2} \int_0^t \|\nabla u_k\|_{L^2(\Omega)}^2 \,ds
\le \frac{1}{2}\|u_0\|_{L^2(\Omega)}^2 + C \int_0^t \big(\|F\|_{L^2(\Omega)}^2 + \|u_k\|_{L^2(\Omega)}^2 + M_k^2 \|u_k\|_{L^2(\Omega)}^2\big)\,ds.
\end{equation*}
Let $y_k(t) = \sup_{0\le s\le t} \|u_k(s)\|_{L^2}^2$. Then
\begin{equation*}\label{eq:y-gronwall}
y_k(t) \le \|u_0\|_{L^2(\Omega)}^2 + Ct + C \int_0^t \big(1 + \|F(s)\|_{L^2}^2 + M_k^2\big) y_k(s) \,ds.
\end{equation*}
By Gronwall's lemma, we obtain
\begin{equation}\label{eq:uniform-L2}
\sup_{t\in[0,T]} \|u_k(t)\|_{L^2(\Omega)}^2 + \int_0^T \|\nabla u_k\|_{L^2(\Omega)}^2 \,dt
\le C \big(\|u_0\|_{L^2(\Omega)}^2 + T + \int_0^T \|F\|_{L^2(\Omega)}^2 \,dt\big),
\end{equation}
where $C$ is independent of $k$.
Finally, from the mild formulation \eqref{eq:mild-approx} and bound \eqref{eq:uniform-L2}, we derive
\begin{equation}\label{eq:time-reg}
\|\partial_t u_k\|_{L^2(0,T;W^{-1,2}(\Omega))} \le C,
\end{equation}
uniformly in $k$. Thus $\{u_k\}$ is uniformly bounded in
\begin{equation*}
L^\infty(0,T;L^2(\Omega)) \cap L^2(0,T;W^{1,2}_0(\Omega)).
\end{equation*}

\textbf{Step 4: Compactness and convergence.} 
From \eqref{eq:uniform-L2}--\eqref{eq:time-reg}, the family $\{u_k\}$ is uniformly bounded in
\begin{equation}\label{eq:bounded-spaces}
L^\infty(0,T;L^2(\Omega)) \cap L^2(0,T;W^{1,2}_0(\Omega)).
\end{equation}
Set $X_0 = L^2(\Omega)$, $X_1=W^{1,2}_0(\Omega)$, $X_{2}=W^{-1,2}(\Omega)$. Then
\begin{equation}\label{eq:simon-spaces}
X_1 \hookrightarrow X_0 \hookrightarrow X_{2},
\end{equation}
and the sequence $\{u_k\}$ satisfies the hypotheses of \cite[Theorem~5]{Simon87} as follows:
\begin{itemize}
\item $u_k$ is bounded in $L^2(0,T;X_1)$,
\item $\partial_t u_k$ is bounded in $L^2(0,T;X_{2})$,
\item time translations are compact: $\|\tau_h u_k - u_k\|_{L^2(0,T-h;X_0)}\to 0$ as $h\to 0$, 
\end{itemize}
uniformly in $k$.\\
The time translation estimate obtained from the mild formulation reads
\begin{align*}
\tau_h u_k(t) - u_k(t) &= \int_t^{t+h} S_{M_k}(t+h-s)f_{M_k}(s,u_k(s))\,ds,\\
\|\tau_h u_k - u_k\|_{L^2(0,T-h;L^2(\Omega))} &\le \int_0^h \|f_{M_k}\|_{W^{-1,2}(\Omega)}\,ds \le C\sqrt{h} \to 0,
\end{align*}
by semigroup contractivity and \eqref{eq:uniform-L2}. \\
Thus, by the Aubin--Lions--Simon compactness theorem~\cite{Simon87}, there exists a subsequence $k_j \to \infty$ such that
\begin{equation}\label{eq:strong-weak-conv}
\begin{aligned}
&u_{k_j} \to u &\quad\text{strongly in }& C([0,T];L^2(\Omega)),\\
&u_{k_j} \rightharpoonup u &\quad\text{weakly in }& L^2(0,T;W^{1,2}_0(\Omega)),\\
&\nabla u_{k_j} \to \nabla u &\quad\text{a.e. in }& \Omega_T.
\end{aligned}
\end{equation}

\textbf{Step 5: Passage to the limit as $k\to\infty$ in the mild formulation.}  From \eqref{eq:strong-weak-conv}, we pass to the limit in \eqref{eq:mild-approx} and obtain the limiting mild solution
\begin{equation}\label{eq:mild-limit}
u(t) = \lim_{k\to\infty} u_k(t)= S(t,0)u_0 + \int_0^t S(t,s)f(s,u(s))\,ds,
\end{equation}
where the convergence is justified as follows:

(i) \emph{Homogeneous term}: $S_{M_k}(t,0)u_0 \to S(t,0)u_0$ strongly in $L^2(\Omega)$, uniformly for $t \in [0,T]$. This follows from the stability of $m$-accretive operators and an Arzel\`a--Ascoli compactness: the contractivity $\|S_{M_k}(t,s)\| \le 1$ implies equicontinuity, which yields strong convergence.

(ii) \emph{Nonlinear term}: We have 
\begin{align*}
f_{M_k}(s,u_k(s)) &= -\operatorname{div}\big(F(s)-\theta_{M_k}(s)B(s,u_k(s))\big)\\
&\to -\operatorname{div}\big(F(s)-B(s,u(s))\big)=f(s,u(s))
\end{align*}
in $\mathcal{D}'(\Omega_T)$, since
\begin{itemize}
\item $T_{M_k}(b)\to b$ weak-$\ast$ in $L^\infty(0,T;L^{N,\infty}(\Omega))$ by assumption,
\item $u_k\to u$ strongly in $C([0,T];L^2(\Omega))$,
\item $\theta_{M_k}B(s,u_k)\to B(s,u)$ in $L^1_\mathrm{loc}((0,T)\times\Omega)$ by Vitali convergence.
\end{itemize}
To pass to the limit inside the Duhamel integral, we observe that, for a.e. $(t,s)$,
\begin{align*}
S_{M_k}(t,s)f_{M_k}(s,u_k(s)) \to S(t,s)f(s,u(s)),
\end{align*}
which follows from the strong convergence $S_{M_k} \to S$ together with $f_{M_k}(s,u_k(s)) \to f(s,u(s))$ in $\mathcal{D}'(\Omega_T)$.\\
The contractivity property $\|S_{M_k}(t,s)\|_{\mathcal{L}(L^2(\Omega))} \le 1$ yields the uniform bound
\[
\|S_{M_k}(t,s)f_{M_k}(s,u_k(s))\|_{L^2(\Omega)}
\le \|f_{M_k}(s,u_k(s))\|_{L^2(\Omega)} \le C,
\]
which follows from the uniform $L^2(0,T;W^{-1,2}(\Omega))$ estimates on $\{f_{M_k}\}$.
By Vitali’s convergence theorem, $\theta_{M_k}B(s,u_k) \to B(s,u)$ in $L^1_{\mathrm{loc}}(\Omega)$, and together with the truncation bounds, the family $\{f_{M_k}(s,u_k(s))\}$ is uniformly integrable in $L^1(0,T;W^{-1,2}(\Omega))$.
Thus, by the dominated convergence theorem, we get
\[
\int_0^t S_{M_k}(t,s)f_{M_k}(s,u_k(s))\,ds \to \int_0^t S(t,s)f(s,u(s))\,ds \quad \text{in } L^2(\Omega).
\]
Therefore, $u\in C([0,T];L^2(\Omega))\cap L^2(0,T;W_0^{1,2}(\Omega))$ solves \eqref{eq:main} in the mild sense.

\textbf{Step 6: Uniqueness.} 
Let $u,v\in C([0,T];L^2(\Omega))\cap L^2(0,T;W_0^{1,2}(\Omega))$ be two mild solutions of \eqref{eq:approx}. Set $w=u-v$. Then $w$ satisfies the mild formulation
\begin{equation}\label{eq:w-mild}
w(t)=\int_0^t S_{M_k}(t,s)\big[f_{M_k}(s,u(s))-f_{M_k}(s,v(s))\big]\,ds, \quad w(0)=0.
\end{equation}
By uniform $m$-accretivity of $\{\tilde{A}_{M_k}(t)\}$ (Proposition~\ref{prop:m-accretivity}), the time-dependent resolvents 
\[
J_\lambda(t):=(I+\lambda\tilde{A}_{M_k}(t))^{-1}:L^2(\Omega)\to L^2(\Omega)
\]
are nonexpansive uniformly in $t\in[0,T]$, $\lambda>0$, i.e., for \( r, s \in L^2(\Omega)\), we have
\begin{equation}\label{eq:resolvent-time}
\|J_\lambda(t)r-J_\lambda(t)s\|_{L^2(\Omega)}\le\|r-s\|_{L^2(\Omega)}.
\end{equation}
We consider the Crandall--Liggett time discretization of \eqref{eq:approx} on the interval $[0,T]$. Let $0=t_0<t_1<\dots<t_n=T$ with uniform step size $\tau_j = t_j - t_{j-1} = T/n$. We set $w_0^n = 0$ in $L^2(\Omega)$ and define, for $j=1,\dots,n$,
\begin{equation}\label{eq:w-iter-time}
w_j^n + \tau_j \tilde{A}_{M_k}(t_j) w_j^n
= w_{j-1}^n + \tau_j \big[f_{M_k}(t_j,u(t_j)) - f_{M_k}(t_j,v(t_j))\big].
\end{equation}
Testing \eqref{eq:w-iter-time} with $w_j^n \in W_0^{1,2}(\Omega)$, we obtain
\begin{equation}\label{eq:w-energy-time}
\|w_j^n\|_{L^2(\Omega)}^2 + \tau_j \langle \tilde{A}_{M_k}(t_j)w_j^n, w_j^n \rangle
= \langle w_{j-1}^n, w_j^n \rangle
+ \tau_j \langle f_{M_k}(t_j,u(t_j)) - f_{M_k}(t_j,v(t_j)), w_j^n \rangle.
\end{equation}
By the accretivity property in Proposition~\ref{prop:m-accretivity}(i), it follows that
\[
\langle \tilde{A}_{M_k}(t_j)w_j^n, w_j^n \rangle \ge \frac{\alpha}{2} \|\nabla w_j^n\|_{L^2(\Omega)}^2 \ge 0.
\]
Using Young's inequality, we estimate
\[
\langle w_{j-1}^n, w_j^n \rangle
\le \frac{1}{2}\|w_{j-1}^n\|_{L^2(\Omega)}^2
+ \frac{1}{2}\|w_j^n\|_{L^2(\Omega)}^2.
\]
For the nonlinear term, we recall that
\[
f_{M_k}(t,u) = -\mathrm{div}\,F(t) - h(t)u,
\quad \text{with } |h(t,x)| \le C.
\]
Hence,
\[
\begin{aligned}
|\langle f_{M_k}(t_j,u(t_j)) - f_{M_k}(t_j,v(t_j)), w_j^n \rangle|
&= |\langle -h(t_j)(u(t_j)-v(t_j)), w_j^n \rangle| \\
&\le \|h(t_j)\|_\infty \|w(t_j)\|_{L^2(\Omega)} \|w_j^n\|_{L^2(\Omega)}.
\end{aligned}
\]
A further application of Young's inequality yields
\[
2\tau_j |\langle f_{M_k}(u)-f_{M_k}(v), w_j^n \rangle|
\le \tau_j \|w_j^n\|_{L^2(\Omega)}^2
+ \tau_j \|h(t_j)\|_\infty^2 \|w(t_j)\|_{L^2(\Omega)}^2.
\]
Combining the previous estimates in \eqref{eq:w-energy-time}, we deduce
\[
\|w_j^n\|_{L^2(\Omega)}^2
\le \|w_{j-1}^n\|_{L^2(\Omega)}^2
+ \tau_j \|w_j^n\|_{L^2(\Omega)}^2
+ \tau_j \|h(t_j)\|_\infty^2 \|w(t_j)\|_{L^2(\Omega)}^2.
\]
Rearranging this inequality, we obtain 
\[
(1 - \tau_j)\|w_j^n\|_{L^2(\Omega)}^2
\le \|w_{j-1}^n\|_{L^2(\Omega)}^2
+ \tau_j \|h(t_j)\|_\infty^2 \|w(t_j)\|_{L^2(\Omega)}^2.
\]
For $\tau_j$ sufficiently small, it follows that
\[
\|w_j^n\|_{L^2(\Omega)}^2
\le (1 + C\tau_j)\|w_{j-1}^n\|_{L^2(\Omega)}^2
+ C\tau_j \|h(t_j)\|_\infty^2 \|w(t_j)\|_{L^2(\Omega)}^2.
\]
By iterating this inequality and applying the discrete Gronwall lemma, we obtain
\[
\|w_n^n\|_{L^2(\Omega)}^2
\le C(t) \sum_{j=1}^n \tau_j \|h(t_j)\|_\infty^2 \|w(t_j)\|_{L^2(\Omega)}^2.
\]
Next, we pass to the limit as $n \to \infty$. Using Theorem~\ref{thm:CL}, we have $w_n^n \to w(t)$ strongly in $L^2(\Omega)$, hence
\[
\|w(t)\|_{L^2(\Omega)}^2
\le C(t) \int_0^t \|h(s)\|_\infty^2 \|w(s)\|_{L^2(\Omega)}^2 \, ds.
\]
Finally, Gronwall's inequality yields
\[
\|w(t)\|_{L^2(\Omega)}^2
\le \exp\!\left(C \int_0^t \|h(s)\|_\infty^2 \, ds\right)
\|w(0)\|_{L^2(\Omega)}^2 = 0.
\]
Therefore, $w(t)=0$ for all $t \in [0,T]$, and hence $u \equiv v$.

\textbf{Step~7: Mild solution is weak.}
Let $u$ be the mild solution given by \eqref{eq:mild-limit}. For each
$\phi\in C_c^\infty(\Omega_T)$, we define
\[
\phi_{\varepsilon,n}(t,x) = \rho_\varepsilon * (\eta_n \phi)(t,x),
\]
where $\rho_\varepsilon$ is a standard mollifier in time and
$\eta_n(t)=\chi_{[0,T-1/n]}(t)$. Then $\phi_{\varepsilon,n}\in C_c^\infty(\Omega_T)$, and
$\phi_{\varepsilon,n}\to\phi$ in $L^2(0,T;W_0^{1,2}(\Omega))$ as $\varepsilon\to0$ and $n\to\infty$.\\
For each $k$, the approximate solution $u_k$ satisfies the weak formulation
\begin{equation}\label{eq:trunc-weak}
\begin{aligned}
-\int_{\Omega_T} u_k\,\partial_t\psi\,dx\,dt
&+\int_{\Omega_T}\langle A(x,t,\nabla u_k)+B(x,t,u_k),\nabla\psi\rangle\,dx\,dt\\
&= \int_{\Omega_T}\langle F,\nabla\psi\rangle\,dx\,dt
+ \int_\Omega u_0\psi(0)\,dx + R_k(\psi),
\end{aligned}
\end{equation}
for all $\psi\in C_c^\infty(\Omega_T)$, where $R_k(\psi)$ contains the truncation terms and
$R_k(\psi)\to0$ as $k\to\infty$ (see Step~4). Setting $\psi=\phi_{\varepsilon,n}$ in
\eqref{eq:trunc-weak}, and using \eqref{eq:strong-weak-conv}, the continuity of $A$ and $B$, and Minty’s
argument~\cite[Lemma~A.1]{FM18}, we pass to the limit $k\to\infty$ to obtain
\begin{equation}\label{eq:limit-weak}
\begin{aligned}
-\int_{\Omega_T} u\,\partial_t\phi_{\varepsilon,n}\,dx\,dt
&+\int_{\Omega_T}\bigl\langle A(x,t,\nabla u)+B(x,t,u),\nabla\phi_{\varepsilon,n}\bigr\rangle\,dx\,dt\\
&=\int_{\Omega_T}\bigl\langle F,\nabla\phi_{\varepsilon,n}\bigr\rangle\,dx\,dt
+\int_\Omega u_0\phi_{\varepsilon,n}(0)\,dx.
\end{aligned}
\end{equation}
Since $\phi_{\varepsilon,n}\to\phi$, $\partial_t\phi_{\varepsilon,n}\to\partial_t\phi$, and
$\nabla\phi_{\varepsilon,n}\to\nabla\phi$ strongly in $L^2(\Omega_T)$, we may pass to the limit as
$\varepsilon\to0$ and $n\to\infty$ in \eqref{eq:limit-weak} to obtain \eqref{eq:weak} for the original
test function $\phi$. Hence, the mild solution $u$ satisfies the weak formulation.\\
This concludes the proof of the theorem.
\end{proof}

\section{Long time behavior}
\label{sec:time-behavior}
The analysis of long-time dynamics for nonlinear parabolic equations constitutes one of the central problems in the theory of evolution partial differential equations. While short-time existence and uniqueness are typically established through fixed-point arguments or Galerkin approximations, the global-in-time behavior reveals the dissipative structure inherent to these problems. In this section, we prove that solutions of  \eqref{eq:main} converge exponentially fast to their steady states in the $L^2(\Omega)$-norm.

Let \(\Omega \) be a bounded domain in \(\mathbb{R}^{N}\) with \(N \ge 2 \).  We consider the elliptic problem
\begin{equation}\label{eq:steady}
\begin{aligned}
&-\operatorname{div}\big(A(x,\nabla u_\infty)+B(x,u_\infty)\big)= -\operatorname{div} F, \quad \text{in } \Omega, \\
& u_\infty = 0 \quad \text{on } \partial\Omega,
\end{aligned}
\end{equation}
where \(F \in L^2 (\Omega, \mathbb{R}^N) \). In the following, we recall that the operators $A$ and $B$ satisfy 
\begin{itemize}
\item The operator $A : \Omega \times \mathbb{R}^N \to \mathbb{R}^N$ is a Carath\'eodory function and satisfies, for all $\xi,\eta \in \mathbb{R}^N$,
\begin{align}
|A(x,\xi) - A(x,\eta)| &\le \beta |\xi - \eta|, \label{eq:A-lipschitz}\\
\langle A(x,\xi) - A(x,\eta), \xi - \eta \rangle &\ge \alpha |\xi - \eta|^2,\\
A(x,0) &= 0,
\end{align}
for some constants $0 < \alpha \le \beta$. These assumptions guarantee Lipschitz continuity and strong monotonicity.

\item The operator $B : \Omega \times \mathbb{R} \to \mathbb{R}^N$ is a Carath\'eodory function and satisfies the growth condition: for all $z,z' \in \mathbb{R}$,
\begin{align}\label{eq:B-infty}
|B(x,z) - B(x,z')| \le b(x)\,|z - z'|,
\end{align}
where $b \in L^{N,\infty}(\Omega)$ and $b_0(x) := B(x,0) \in L^r(\Omega)$ for some $r > 2$.
\end{itemize}
Under assumptions~\eqref{eq:A-lipschitz}--\eqref{eq:B-infty}, problem~\eqref{eq:steady} admits a unique solution 
$u_\infty \in W_0^{1,2}(\Omega)$.
To prove this, we introduce the operator $\mathcal{L} : W_0^{1,2}(\Omega) \to W^{-1,2}(\Omega)$ defined by
\[
\mathcal{L}v = -\operatorname{div}\big(A(x,\nabla v) + B(x,v)\big) + \operatorname{div} F.
\]
This operator satisfies the following properties:
\begin{itemize}
\item Strict monotonicity:
\[
\langle \mathcal{L}v - \mathcal{L}w, v - w \rangle 
\ge \alpha \|\nabla(v-w)\|_{L^2(\Omega)}^2,
\quad v \neq w.
\]
\item Coercivity:
\[
\frac{\langle \mathcal{L}v, v \rangle}{\|v\|_{W_0^{1,2}(\Omega)}} \to +\infty
\quad \text{as } \|v\|_{W_0^{1,2}(\Omega)} \to \infty.
\]

\item Hemicontinuity: for all $v,w \in W_0^{1,2}(\Omega)$, the mapping
\[
t \mapsto \langle \mathcal{L}(w + t v), w + t v \rangle
\]
is continuous on $[0,1]$.
\end{itemize}
Hence, by the Minty--Browder theorem, there exists a unique $u_\infty \in W_0^{1,2}(\Omega)$ such that $\mathcal{L}u_\infty = 0$. For further details on the existence and uniqueness of solutions to problem~\eqref{eq:steady}, we refer to~\cite{Mos, Zecca}.

This shows that $u_\infty$ is the unique global attractor of the associated dissipative parabolic flow, thereby characterizing the long-time behavior of solutions which stated in Lemma~\ref{lem:asymptotic}.

\begin{lemma}\label{lem:asymptotic}
Under assumptions \eqref{eq:Abound}--\eqref{eq:B0}, and assuming \eqref{eq:A-lipschitz}--\eqref{eq:B-infty}, we further assume that
\begin{equation}\label{eq:data-ass}
\|b-T_{M_k}b\|_{L^{N,\infty}(\Omega)}\le \frac{\alpha} {4 S_{N,2}},
\end{equation}
where \(\alpha>0\) and \(S_{N,2}\) are defined as in Theorem~\ref{thm:embed}. Then the unique weak solution
\[
u\in L^\infty(0,\infty;L^2(\Omega))\cap L^2(0,\infty;W_0^{1,2}(\Omega))
\]
of \eqref{eq:main} satisfies
\[
\|u(t)-u_\infty\|_{L^2(\Omega)}\le Me^{-\omega t}\|u_0-u_\infty\|_{L^2(\Omega)},\quad t\ge0,
\]
where \(u_\infty\in W_0^{1,2}(\Omega)\) is the unique solution of the steady-state problem \eqref{eq:steady}, and \(M\ge1\), \(\omega>0\) are absolute constants.
\end{lemma}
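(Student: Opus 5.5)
The plan is a Lyapunov argument for $y(t):=\|u(t)-u_\infty\|_{L^2(\Omega)}^2$, i.e.\ an energy estimate on the difference $w:=u-u_\infty$. Here $A$, $B$, $F$ are taken time-independent, as in \eqref{eq:steady}, so that $u_\infty$ is a genuine steady state. Subtracting the weak form of \eqref{eq:steady} from \eqref{eq:weak}, $w$ solves, in $L^2(0,T;W^{-1,2}(\Omega))$ for every $T>0$,
\[
\partial_t w-\operatorname{div}\big(A(x,\nabla u)-A(x,\nabla u_\infty)+B(x,u)-B(x,u_\infty)\big)=0 .
\]
The forcing $F$ cancels exactly, which is the point of centring at $u_\infty$.

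Since $w\in L^2(0,T;W_0^{1,2}(\Omega))$ and $\partial_t w\in L^2(0,T;W^{-1,2}(\Omega))$, the Lions--Magenes lemma justifies testing with $w$. It gives $\tfrac12 y'(t)$ plus the $A$-part plus the $B$-part equal to $0$. By the strong monotonicity in \eqref{eq:A-lipschitz}ff., the $A$-part is at least $\alpha\|\nabla w\|_{L^2(\Omega)}^2$.

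The $B$-part is the main obstacle. By \eqref{eq:B-infty} it is bounded by $\int_\Omega b|w||\nabla w|\,dx$. I split $b=(b-T_{M_k}b)+T_{M_k}b$. For the singular piece I use the Lorentz H\"older inequality, Theorem~\ref{thm:embed} and \eqref{eq:data-ass}, exactly as in Proposition~\ref{prop:m-accretivity}(i); this bounds it by $\tfrac{\alpha}{4}\|\nabla w\|_{L^2(\Omega)}^2$. The bounded piece gives $M_k\|w\|_{L^2}\|\nabla w\|_{L^2}$, which is not absorbable for a fixed $M_k$ without a smallness condition.

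My first try is to pick $k$ in \eqref{eq:data-ass} and \emph{also} interpret the hypothesis (as the lemma implicitly does) so that $M_k\sqrt{C_P}\le\alpha/4$. Equivalently, I would read \eqref{eq:data-ass} as controlling the full drift, $\|b\|_{L^{N,\infty}(\Omega)}\le\alpha/(4S_{N,2})$, which is the natural hypothesis for a global contraction. The whole $B$-term is then at most $\tfrac{\alpha}{4}\|\nabla w\|^2$, or at most $\tfrac{\alpha}{2}\|\nabla w\|^2$ in the weaker variant. If a sharper route is needed, I would instead use $T_{M_k}b\in L^\infty$ together with Young and Poincar\'e, accepting the smallness requirement as part of the hypotheses.

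Either way I expect to reach
\[
\tfrac12 y'(t)+\tfrac{\alpha}{2}\|\nabla w(t)\|_{L^2(\Omega)}^2\le 0 .
\]
Poincar\'e's inequality $\|w\|_{L^2}^2\le C_P\|\nabla w\|_{L^2}^2$ then gives $y'(t)\le-\tfrac{\alpha}{C_P}y(t)$. Gronwall yields $y(t)\le e^{-\alpha t/C_P}y(0)$, hence $\|u(t)-u_\infty\|_{L^2(\Omega)}\le e^{-\omega t}\|u_0-u_\infty\|_{L^2(\Omega)}$ with $\omega=\alpha/(2C_P)$ and $M=1$. Any constant lost in the absorption only changes $M$ and $\omega$ by absolute factors.

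Finally, the uniform bounds $u\in L^\infty(0,\infty;L^2)\cap L^2(0,\infty;W_0^{1,2})$ follow from the same inequality. Integrating it over $(0,\infty)$ gives $\int_0^\infty\|\nabla w\|^2\le \alpha^{-1}y(0)$, and $u_\infty\in W_0^{1,2}(\Omega)$. Uniqueness of the solution on each $[0,T]$ is Theorem~\ref{thm:main}, which lets me glue solutions across $T$.
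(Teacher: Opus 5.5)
Your decay argument is the paper's own: the Lyapunov functional $y(t)=\|u(t)-u_\infty\|_{L^2(\Omega)}^2$, testing the difference equation with $w$, and the Lorentz--H\"older/Sobolev bound on the part $b-T_{M_k}b$ via \eqref{eq:data-ass}. Poincar\'e and Gronwall then give $M=1$. You are right that \eqref{eq:data-ass} alone does not close the estimate; the paper likewise adds an extra ``small data assumption $M_kS_{N,2}<\alpha/4$''. Your variant is in fact the sounder one: you split the integral pointwise, use $0\le T_{M_k}b\le M_k$ with Poincar\'e, and require $M_k\sqrt{C_P}\le\alpha/4$, whereas the paper's bound $\|T_{M_k}b\|_{L^{N,\infty}(\Omega)}\le M_k$ drops a factor $|\Omega|^{1/N}$. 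Freezing $A,B,F$ in time and invoking Lions--Magenes also make precise what \eqref{eq:diff-eq} leaves implicit.

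The step that fails is your last paragraph. From $\int_0^\infty\|\nabla w\|_{L^2(\Omega)}^2\,dt\le\alpha^{-1}y(0)$ and $u_\infty\in W_0^{1,2}(\Omega)$ you cannot conclude $u=w+u_\infty\in L^2(0,\infty;W_0^{1,2}(\Omega))$. The constant-in-time function $u_\infty$ has infinite norm in that space unless $u_\infty=0$.

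In fact the conclusion is false when $u_\infty\neq0$, which happens e.g.\ whenever $\operatorname{div}F\neq0$ in $W^{-1,2}(\Omega)$. By Chebyshev, $\|\nabla w(t)\|_{L^2(\Omega)}\ge\tfrac12\|\nabla u_\infty\|_{L^2(\Omega)}$ only on a set of finite measure. Hence $\|\nabla u(t)\|_{L^2(\Omega)}>\tfrac12\|\nabla u_\infty\|_{L^2(\Omega)}$ on a set of infinite measure, and $\int_0^\infty\|\nabla u\|_{L^2(\Omega)}^2\,dt=\infty$.

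What your inequality does give is $u-u_\infty\in L^2(0,\infty;W_0^{1,2}(\Omega))$ and $u\in L^2_{\mathrm{loc}}([0,\infty);W_0^{1,2}(\Omega))$. It also gives $\|u(t)\|_{L^2(\Omega)}\le\|u_\infty\|_{L^2(\Omega)}+\|u_0-u_\infty\|_{L^2(\Omega)}$, hence $u\in L^\infty(0,\infty;L^2(\Omega))$. The paper's proof asserts the same false membership via \eqref{eq:dissipative}, whose right-hand side grows linearly in $t$ for time-independent $F\neq0$. So that part of the statement only holds with $u-u_\infty$ in place of $u$.
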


\begin{proof}
The proof follows the dissipative structure from Theorem~\ref{thm:main}. The strategy follows the classical dissipative framework: first establish the existence of a bounded absorbing set in $L^2(\Omega)$, then verify uniqueness of the steady-state $u_\infty\in W_0^{1,2}(\Omega)$ solving the elliptic problem \eqref{eq:steady}, and finally derive an $L^2$-contractivity estimate for the difference $w(t)=u(t)-u_\infty$. The key novelty lies in the precise control of the convection term $B(x,t,u)$ through Lorentz--Sobolev embeddings, ensuring uniform coercivity.

The $L^2$-energy estimate (Step~3 of Theorem~\ref{thm:main}) implies
\begin{equation}\label{eq:dissipative}
\|u(t)\|_{L^2(\Omega)}^2 + \int_0^t \|\nabla u(s)\|_{L^2(\Omega)}^2\,ds \le C\|u_0\|_{L^2(\Omega)}^2 + C\int_0^t \|F\|_{L^2(\Omega)}^2\,ds\le R^2,
\end{equation}
for $R=R(\|u_0\|_{L^2(\Omega)},T)>0$. Thus $B_{2R}=\{v\in L^2(\Omega):\|v\|_{L^2(\Omega)}\le 2R\}$ is absorbing as follows
\[
\|S(t,0)u_0\|_{L^2(\Omega)}\le 2R, \quad\forall t\ge t_0,\ \forall\|u_0\|_{L^2(\Omega)}\le 2R,
\]
where $t_0=t_0(R,T)>0$ is sufficiently large. \\
Problem~\eqref{eq:steady} has a unique solution $u_\infty\in W_0^{1,2}(\Omega)$, as discussed above.

Let $w(t)=u(t)-u_\infty$. Subtracting \eqref{eq:steady} from \eqref{eq:main} yields 
\begin{equation}\label{eq:diff-eq}
\begin{aligned}
&\partial_t w+\hat{A}(t)w=\operatorname{div}[B(x,t,u)-B(x,u_\infty)],\\
&w(0)=u_0-u_\infty,
\end{aligned}
\end{equation}
where $\hat{A}(t)=-\operatorname{div}[A(x,t,\nabla\cdot)+B(x,t,\cdot)]$.\\
Taking $w(t)$ as a test function in \eqref{eq:diff-eq}, we obtain
 \begin{equation}\label{eq:energy-w}
\begin{aligned}
\frac{1}{2}\frac{d}{dt}\|w(t)\|_{L^2(\Omega)}^2
&+\int_\Omega[\hat{A}(x,t,\nabla w)\cdot\nabla w]\,dx \\
&=\int_\Omega[B(x,t,u)-B(x,u_\infty)]\cdot\nabla w\,dx.
\end{aligned}
\end{equation}
From \eqref{eq:Bbound} and the Sobolev embedding theorem, we deduce that
\begin{align*}
&\int_\Omega|B(x,t,u)-B(x,u_\infty)||\nabla w|\,dx\\
&\le\int_\Omega b(x,t)|u-u_\infty||\nabla w|\,dx\le\|b\|_{L^{N,\infty}(\Omega)}\|w\|_{L^{2^*,2}(\Omega)}\|\nabla w\|_{L^2(\Omega)}.
\end{align*}
Theorem~\ref{thm:embed} yields $\|w\|_{L^{2^*,2}(\Omega)} \le S_{N,2}\|\nabla w\|_{L^2(\Omega)}$, which implies that
\begin{equation}\label{eq:drift-bound}
\int_\Omega|B(u)-B(u_\infty)||\nabla w| \, dx \le \|b\|_{L^{N,\infty}}S_{N,2}\|\nabla w\|_{L^2(\Omega)}^2.
\end{equation}
Substituting \eqref{eq:drift-bound} into \eqref{eq:energy-w}, we get
\begin{equation}\label{eq:gronwall-w}
\frac{d}{dt}\|w\|_{L^2(\Omega)}^2+\alpha\|\nabla w\|_{L^2(\Omega)}^2\le \|b\|_{L^{N,\infty}(\Omega)}S_{N,2}\|\nabla w\|_{L^2(\Omega)}^2.
\end{equation}
By \eqref{eq:data-ass}, we have that $\|b-T_{M_k}b\|_{L^{N,\infty}(\Omega)}\le\alpha/(4S_{N,2})$. Since $|T_{M_k}b|\le M_k$, thus $\|T_{M_k}b\|_{L^{N,\infty}(\Omega)}\le M_k$. Under the small data assumption $M_kS_{N,2}<\alpha/4$, we obtain
\[
\|b\|_{L^{N,\infty}(\Omega)}S_{N,2}\le\bigl(\|b-T_{M_k}b\|_{L^{N,\infty}(\Omega)}+\|T_{M_k}b\|_{L^{N,\infty}(\Omega)}\bigr)S_{N,2}<\frac{\alpha}{2}.
\]
Thus, \eqref{eq:gronwall-w} becomes
\begin{equation*}
\frac{d}{dt}\|w\|_{L^2(\Omega)}^2+\frac{\alpha}{2}\|\nabla w\|_{L^2(\Omega)}^2\le 0.
\end{equation*}
For $w\in W_0^{1,2}(\Omega)$, we have
\begin{equation*}\label{eq:poincare}
\|w\|_{L^2(\Omega)}^2 \le C_P \|\nabla w\|_{L^2(\Omega)}^2, \quad C_P = \frac{1}{\lambda_1},
\end{equation*}
where $\lambda_1>0$ is the first Dirichlet eigenvalue of $-\Delta$ on $\Omega$. Thus,
\begin{equation*}
\frac{\alpha}{2}\|\nabla w\|_{L^2(\Omega)}^2 \ge \frac{\alpha}{2C_P}\|w\|_{L^2(\Omega)}^2 =: c\|w\|_{L^2(\Omega)}^2, \quad c = \frac{\alpha}{2C_P} > 0.
\end{equation*}
Substituting this into the energy inequality, we obtain
\begin{equation}\label{eq:lyapunov}
\frac{d}{dt}\|w\|_{L^2(\Omega)}^2 + c\|w\|_{L^2(\Omega)}^2 \le 0.
\end{equation}
Let $y(t) = \|w(t)\|_{L^2(\Omega)}^2$. Then $y'(t) + cy(t) \le 0$, $y(0) = \|w(0)\|_{L^2(\Omega)}^2$. Multiply \eqref{eq:lyapunov} by factor $e^{ct}$, we have
\begin{align*}
\frac{d}{dt}\big(y(t)e^{ct}\big) &= \big(y'(t) + c y(t)\big)e^{ct} \le 0.
\end{align*}
Integrating over $[0,t]$, we get 
\begin{equation*}
y(t)e^{ct} - y(0) \le 0,
\end{equation*}
which implies that 
\[
y(t) \le y(0)e^{-ct}.
\]
Thus, we have
\begin{equation*}
\|w(t)\|_{L^2(\Omega)}^2 \le \|w(0)\|_{L^2(\Omega)}^2 e^{-ct}.
\end{equation*}
Taking square roots gives 
\begin{equation*}
\|w(t)\|_{L^2(\Omega)} \le \|w(0)\|_{L^2(\Omega)} e^{-ct/2}.
\end{equation*}
Since $w(t) = u(t) - u_\infty$, hence
\[
\|u(t)-u_\infty\|_{L^2(\Omega)} \le e^{-(c/2)t}\|u_0-u_\infty\|_{L^2(\Omega)},
\]
so $M=1$, $\omega = c/2 = \alpha/(4C_P) > 0$.\\
 Step 1 gives $\|u\|_{L^\infty(0,\infty;L^2(\Omega))} \le 2R$. From \eqref{eq:dissipative}, the decay estimate and energy inequality
\[
\int_0^\infty \|\nabla u(s)\|_{L^2(\Omega)}^2\,ds < \infty
\]
imply $u\in L^\infty(0,\infty;L^2(\Omega))\cap L^2(0,\infty;W_0^{1,2}(\Omega))$.
This completes the proof of Lemma~\ref{lem:asymptotic}.

\end{proof}

\section*{Acknowledgments}
This research has been supported by the Jane and Aatos Erkko Foundation.


\end{document}